\theoremstyle{plain}
\newtheorem{prop}{Proposition}[section]
\newtheorem{thm}[prop]{Theorem}
\theoremstyle{definition}
\newtheorem{defi}[prop]{Definition}
\theoremstyle{plain}
\newtheorem{rem}[prop]{Remark}
\numberwithin{equation}{section}
\title{Quasi regular Dirichlet forms and the stochastic quantization problem}
\author{Sergio Albeverio \thanks{Inst.\ Appl.\ Mathematics and HCM, University of Bonn; CERFIM (Locarno); albeverio@uni-bonn.de} \thanks{BiBoS (Bielefeld, Bonn)} \and Zhi Ming Ma \thanks{Inst.\ Appl.\ Math., AMSS, CAS, Beijing} \and Michael R{\"o}ckner \footnotemark[2] \thanks{Mathematics Faculty, University of Bielefeld}}
\date{Dedicated to Masatoshi Fukushima for his $80^{\mathrm{th}}$ birthday}
\begin{document}

\maketitle
\centerline{\date{7.4.14}}
\mbox{}
\begin{abstract}
	After recalling basic features of the theory of symmetric quasi regular Dirichlet forms we show how by applying it to the stochastic quantization equation, 
	with Gaussian space-time noise, one obtains weak solutions in a large invariant set. Subsequently, we discuss non symmetric quasi regular Dirichlet forms and 
	show in particular by two simple examples in infinite dimensions that infinitesimal invariance, does not imply global invariance. We also present a simple example of non-Markov uniqueness in infinite dimensions.
\end{abstract}


\section{Introduction}

The theory of symmetric Dirichlet forms is the natural extension to higher dimensional state spaces of the classical Feller theory of stochastic processes on the real line. Through ground breaking work by Beurling and Deny (1958-59), \cite{BD58}, \cite{BD59}, \cite{Den70}, \cite{Sil74}, \cite{Sil76}, and Fukushima (since 1971), \cite{Fu71a}, \cite{Fu71b}, \cite{Fu80}, \cite{FOT11}, \cite{CF12}, it has developed into a powerful method of combining analytic and functional analytic, as well as potential theoretic and probabilistic methods to handle the basic construction of stochastic processes under natural conditions about the local characteristics (e.g., drift and diffusion coefficients), avoiding unnecessary regularity conditions. 

Such processes arise in a number of applied problems where coefficients can be singular, locally or at infinity. For detailed expositions of the general theory, mainly concentrated on finite dimensional state spaces, see \cite{Fu80}, \cite{F82}, \cite{FOT11}, \cite{Ma95} and, e.g., \cite{Alb03}, \cite{BBCK09}, \cite{E99}, \cite{Fu10}, \cite{GriH08}, \cite{Hi10}, \cite{KaYa05}, \cite{KKVW09}, \cite{KT91}. For further new developments, e.g.\ concerning boundary conditions, or non symmetric processes, or processes with jumps, see, e.g., \cite{CF12}, \cite{CMR94}, \cite{AFH11}, \cite{AMU98}, \cite{AU00}, \cite{Sta99}, \cite{Sta99b}, \cite{BiT07}, \cite{Bou03}, \cite{KS03}, \cite{LW99}, \cite{Osh04}, \cite{Jac01}, \cite{JS00}, \cite{SU07}. 
An extension of the theory to processes with infinite dimensional state spaces of Lusin type has been first described indirectly, by a suitable map into a larger locally compact space and consequent reductions to the finite dimensional case, by Fukushima \cite{Fu71a}, \cite{Fu71b} and then further developed in \cite{Fu80}, \cite{Fu92}, \cite{FH01}, \cite{FOT11}, \cite{ST92}. This has been called ``regularization method'' (see \cite{Fu80}, \cite{AMR93}, \cite{ST92}, \cite{Alb03}). Another approach on Wiener space has been developed by \cite{BH91} and on certain Banach spaces by \cite{FeyLa91}. Mainly motivated by applications to infinite dimensional processes connected with SPDE's, like those arising in quantum field theory, see, e.g., \cite{Gro76}, \cite{AHK74}, \cite{AHK76}, \cite{AHK77a}, \cite{AHK77b}, \cite{AHKS77}, \cite{Alb97}, \cite{N73}, \cite{PaWu81}, an extension of Dirichlet form theory to infinite dimensional state spaces of more general type, including spaces of distributions, has been developed in \cite{AHK76}, \cite{AHK77a}, \cite{AHK77b}, \cite{AM91}, \cite{AM92}, \cite{AMR92a}, \cite{AMR92b}, \cite{AMR92c}, \cite{AMR93}, \cite{AFHMR92}, \cite{AR89a}, \cite{AR90b}, \cite{Ku}, \cite{Kus82}. 

This theory is now known as the theory of quasi-regular Dirichlet forms and a systematic exposition of it is in \cite{MR92}. For newer developments see also \cite{AFHMR92}, \cite{Aid00}, \cite{Alb03}, \cite{AR05}, \cite{ARW}, \cite{Fu84}, \cite{DG97}, \cite{E99}, \cite{Ma95}, \cite{Kol06}. 

One main example of applications of the theory has been the construction of processes which arise in certain problems of quantum mechanics with singular potentials (for which we refer to [KaStr14]) and quantum field theory (stochastic quantization equations for invariant measures and Hamiltonians for $P(\varphi)_2$, $\exp(\varphi)_2$, $\sin(\varphi)_2$, $\varphi_3^4$ and other models). Recently there arose a renewed interest in SPDE's related to such problems, particularly in connection with Hairer's theory of regularity structures \cite{Hai14} and related work by, a.a., Guminelli, Zambotti, see references in \cite{Hai14}.

Since the strength of results obtained by the theory of (quasi-regular) Dirichlet forms in connection with (infinite dimensional) processes has often not been fully realized in the literature, one of the aims of the present paper is to both recall main results and clarify both solved and not yet solved problems. \\
The structure of the paper is as follows. 

In Section 2 we shall recall the basic setting of the theory of symmetric quasi regular Dirichlet forms and associated processes, including invariant measures and ergodicity questions. 

Section 3 recalls the setting for classical infinite dimensional Dirichlet forms given by a probability measure, and its relation to infinite dimensional diffusion processes, their generators, associated Kolmogorov equations, and invariant probability measures. 

Section 4 is devoted to the study of the stochastic quantization equation for the $P(\varphi)_2$ model, with remarks on related models. 
We distinguish clearly between the problem in a bounded domain and the global problem. We also recall our result on the ergodicity of the solution.

Section 5 is devoted to the discussion of further developments concerning non symmetric Dirichlet forms. It is pointed out by two examples in infinite dimension that local invariance is indeed weaker than invariance of measures. On the way we also give a new presentation of an example of non-Markov uniqueness (in fact in the language of SPDE), first constructed in \cite{E99}.


\section{Symmetric quasi regular Dirichlet forms}

We first recall some basic notations of the theory of symmetric quasi regular Dirichlet forms, for later use. 

Let $E$ be a Hausdorff topological space, $m$ a $\sigma$-finite measure on $E$, and let $\mathcal{B}$ the smallest $\sigma$-algebra of subsets of $E$ with respect to which all continuous functions on $E$ are measurable. 

Let $\mathcal{E}$ be a symmetric Dirichlet form acting in the real $L^2(m)$-space i.e.\ $\mathcal{E}$ is a positive, symmetric, bilinear, closed form with domain $D(\mathcal{E})$ dense in $L^2(m)$, and such that $\mathcal{E}(\Phi(u),\Phi(u)) \le \mathcal{E}(u,u)$, for any $u \in D(\mathcal{E})$, where $\Phi(t) = (0 \vee t) \wedge 1$, $t \in \mathbb{R}$. The latter condition is known to be equivalent with the condition that the associated $C_0$-contraction semigroup $T_t$, $t \geq 0$, is submarkovian (i.e. $0 \le u \le 1$ $m$-a.e.\ implies $0 \le T_t u \le 1$ $m$-a.e., for all $u \in L^2(m)$; association means that $\lim_{t \downarrow 0} \frac{1}{t} (u - T_tu,v)_{L^2(m)} = \mathcal{E}(u,v)$, $\forall u,v \in D(\mathcal{E})$. 

These conditions are also equivalent to other conditions expressed either in terms of $\mathcal{E}$ or the associated infinitesimal generator $L$ of $T_t$, $t \geq 0$, resp.\ the resolvent, see, e.g., \cite{Alb03} (Theorem 9). One observation which is important for an analytic construction is that for having the above contraction property in terms of $\Phi$ it is enough to verify it on a domain where the form is still closable and with $\Phi$ replaced by a regularized version $\Phi_{\varepsilon}$ of it (see, e.g., Definition 18 and Theorem 9 in \cite{Alb03}). \\
A symmetric Dirichlet form is called quasi-regular if the following holds:

\begin{enumerate}
	\item There exists a sequence $(F_k)_{k \in \mathbb{N}}$ of compact subsets of $E$ such that $\bigcup_k D(\mathcal{E})_{F_k}$ is $\mathcal{E}_1^{\frac{1}{2}}$-dense in $D(\mathcal{E})$ (where $D(\mathcal{E})_{F_k} := \{ u \in D(\mathcal{E}) \vert u=0 \ m\text{-a.e.\ on} \ E-F_k \}$; $\mathcal{E}_1^{\frac{1}{2}}$ is the norm given by the scalar product in $L^2(m)$ defined by $\mathcal{E}_1$, where
				\begin{equation}
					\mathcal{E}_1 (u,v) := \mathcal{E}(u,v) + (u,v)_{L^2(m)} \text{,} 
					\nonumber
				\end{equation}
				$(\, , \,)_{L^2(m)}$ being the scalar product in $L^2(m)$. Such a sequence $(F_k)_{k \in \mathbb{N}}$ is called an $\mathcal{E}$-nest;
	\item	there exists an $\mathcal{E}_1^{\frac{1}{2}}$-dense subset of $D(\mathcal{E})$ whose elements have $\mathcal{E}$-quasi-everywhere $m$-versions (where ``quasi'' is relative to the potential theory associated with $\mathcal{E}$, i.e.\ quasi-everywhere (noted q.e.) means with the possible exception of some $\mathcal{E}$-exceptional subset of $E$, i.e.\ a subset $N \subset \bigcap_k (E - F_k)$ for some sequence $F_k$ of the above type; this is equivalent with $N$ having $\mathcal{E}_1$-capacity $0$, see \cite{Alb03}, \cite{MR92});
	\item	there exists $u_n \in D(\mathcal{E})$, $n \in \mathbb{N}$, with $\mathcal{E}$-quasi continuous $m$-versions $\tilde{u}_n$ and there exists an $\mathcal{E}$-exceptional subset $N$ of $E$ s.t.\ $\{ \tilde{u}_n \}_{n \in \mathbb{N}}$ separates the points of $E-N$ (a real function $u$ on $E$ is called quasi continuous when there exists an $\mathcal{E}$-nest s.t.\ $u$ is continuous on each $F_k$, and is defined on a domain in $E$ containing $\bigcup_k F_k$; for this it is sufficient that given $\varepsilon > 0$ there exists an $U_{\varepsilon}$ open in $E$, s.t.\ cap $U_{\varepsilon} < \varepsilon$ and $u$ is continuous on $E - U_{\varepsilon}$ (\cite{MR92})).
\end{enumerate}

If $E$ is a locally compact separable metric space then $\mathcal{E}$ regular implies $\mathcal{E}$ quasi-regular (but not viceversa, in general). \\
The relation between these analytic notions and the probabilistic notions goes as follows. A submarkovian semigroup $p_t$ acting in $\mathcal{B}_b(E) \cap L^2(m)$ is associated with a symmetric Dirichlet form $(\mathcal{E}, D(\mathcal{E}))$ on $L^2(m)$ if $p_tu$ is an $m$-version of $T_tu$, where $T_t$, $t \geq 0$ is the $C_0$-contraction semigroup associated with $(\mathcal{E}, D(\mathcal{E}))$. 

A stochastic sub-Markov process $\boldsymbol{M} = (\Omega, \mathcal{F}, (\mathcal{F}_t)_{t \geq 0}, (X_t)_{t \geq 0}, (P_z)_{z \in E})$ with state space $E$ and transition semigroup $p_t$, in the sense that $(p_tu)(z) = E^zu (X_t)$, $\forall z \in E$, $t \geq 0$, $u \in \mathcal{B}_b(E) \cap L^2(m)$, is said to be associated with $\mathcal{E}$ or $(T_t, t \geq 0)$ if $p_t$ is associated with $\mathcal{E}$. $p_t$ is $m$-symmetric (i.e.\ a symmetric semigroup in $L^2(m)$) and, in the Markov case, i.e.\ when $(P_t1)(z) = 1$, has $m$ has an invariant measure. If $p_tu$ is $\mathcal{E}$-quasi-continuous for all $t > 0$ then $\boldsymbol{M}$ is said to be properly associated with $\mathcal{E}$ (this is a substitute of the strong Feller property in the present case, where $E$ is non necessarily locally compact). It turns out that

\begin{enumerate}
	\item $\mathcal{E}$ is quasi-regular iff $\boldsymbol{M}$ is an $m$-tight special standard process (in the sense of, e.g., \cite{MR92});
	\item $\mathcal{E}$ is a local (in the sense that $\mathcal{E}(u,v) = 0$ for all $u,v \in D(\mathcal{E})$ with $\operatorname{supp}(\left| u \right| \cdot m) \cap \operatorname{supp}(\left| v \right| \cdot m) = \varnothing$ and $\operatorname{supp}(\left| u \right| \cdot m),\operatorname{supp}(\left| v \right| \cdot m)$ compact)\footnote{These supports are well defined since $E$ can be assumed, without loss of generality, to be a Lusin space, see \cite{MR92} (p.\ 101).} and quasi regular iff $\boldsymbol{M}$ is an $m$-tight special standard process and it is a diffusion i.e.\ $P_z \{ t \to X_t \ \text{continuous on} \ [0,\zeta) \} = 1$ for all $z \in E$ (for some random variable $\zeta$, with values in $[0, +\infty]$, the life time of $X$).
\end{enumerate}

A rough description of $m$-tight special standard processes is that they are right processes ``concentrated on compacts'', with almost-sure left limits and are almost-surely quasi left continuous. Right processes are adapted, strong Markov, normal (i.e.\ $P_z (X_0 = z) = 1$, for all $z = E \cup \{ \Delta \}$, $\{ \Delta \}$ being the $1$-point set describing the ``cemetery'' for the process), and have right continuous paths (see \cite{MR92}, \cite{Alb03} for details). \\
The connection of quasi regular Dirichlet forms with stochastic analysis goes beyond potential theory inasmuch as it permits an extension to the infinite dimensional case of Fukushima's decomposition.

\begin{thm}
	Let $\boldsymbol{M}$ be a right process associated with a quasi regular symmetric Dirichlet form $(\mathcal{E}, D(\mathcal{E}))$ on a Hausdorff topological space $E$. If $u \in D(\mathcal{E})$ then there exists a martingale additive functional of finite energy $M^{[u]}$ and a continuous additive functional of zero energy $N$ s.t.\ for any quasi-continuous version $\tilde{u}$ of $u$ we have
	\begin{equation}
		\tilde{u} (X_t) = \tilde{u} (X_0) + M_t^{[u]} + N_t^{[u]} \text{.}
		\nonumber
	\end{equation}
\end{thm}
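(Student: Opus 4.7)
The plan is to invoke the transfer method of \cite{MR92} to reduce the statement to Fukushima's classical decomposition in the regular, locally compact case, where it is well established (\cite{FOT11}, Theorem 5.2.2), and then transport the decomposition back through the quasi-homeomorphism.

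First, I would exploit the fact that every quasi-regular symmetric Dirichlet form $(\mathcal{E}, D(\mathcal{E}))$ on a Hausdorff space $E$ is quasi-homeomorphic to a regular symmetric Dirichlet form $(\hat{\mathcal{E}}, D(\hat{\mathcal{E}}))$ on a locally compact separable metric space $\hat{E}$ equipped with a Radon measure $\hat{m}$ (see \cite{MR92}, Chapter VI). More precisely, there exist an $\mathcal{E}$-nest $(F_k)$ in $E$, an $\hat{\mathcal{E}}$-nest $(\hat{F}_k)$ in $\hat{E}$, and a measurable bijection $j \colon \bigcup_k F_k \to \bigcup_k \hat{F}_k$ whose restrictions $j|_{F_k}$ are homeomorphisms, such that $\hat{m} = j_* m$, the pull-back $u \mapsto u \circ j$ is a unitary isomorphism between $D(\hat{\mathcal{E}})$ and $D(\mathcal{E})$ preserving the forms, and $\mathcal{E}$-exceptional sets correspond to $\hat{\mathcal{E}}$-exceptional sets. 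Associated with $(\hat{\mathcal{E}}, D(\hat{\mathcal{E}}))$ is a Hunt process $\hat{\boldsymbol{M}}$ on $\hat{E}$ such that $\boldsymbol{M}$ and $\hat{\boldsymbol{M}}$ are equivalent under $j$ in the sense that $\hat{X}_t = j(X_t)$ $P_z$-a.s.\ for q.e.\ starting point.

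Second, given $u \in D(\mathcal{E})$, set $\hat{u} := u \circ j^{-1} \in D(\hat{\mathcal{E}})$, and let $\tilde{\hat{u}}$ be an $\hat{\mathcal{E}}$-quasi continuous version. Fukushima's classical theorem on the locally compact space $\hat{E}$ produces a martingale additive functional $\hat{M}^{[\hat{u}]}$ of finite energy and a continuous additive functional $\hat{N}^{[\hat{u}]}$ of zero energy for $\hat{\boldsymbol{M}}$ with
\begin{equation}
\tilde{\hat{u}}(\hat{X}_t) = \tilde{\hat{u}}(\hat{X}_0) + \hat{M}_t^{[\hat{u}]} + \hat{N}_t^{[\hat{u}]} , \qquad t \geq 0 . \nonumber
\end{equation}
I then define $M_t^{[u]} := \hat{M}_t^{[\hat{u}]}$ and $N_t^{[u]} := \hat{N}_t^{[\hat{u}]}$ on the original probability space, viewed as functionals of $X_t = j^{-1}(\hat{X}_t)$. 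Because the quasi-homeomorphism preserves transition semigroups, filtrations, additivity, continuity, and, by the form-preserving property, also the quadratic form that governs energy, the transported $M^{[u]}$ is again a MAF of finite energy and $N^{[u]}$ is a CAF of zero energy. Setting $\tilde{u} := \tilde{\hat{u}} \circ j$ yields an $\mathcal{E}$-quasi continuous version of $u$, and the desired identity $\tilde{u}(X_t) = \tilde{u}(X_0) + M_t^{[u]} + N_t^{[u]}$ follows.

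Finally, I would check independence of the chosen quasi-continuous version: any two such versions coincide q.e., hence outside an $\mathcal{E}$-exceptional set which is not hit by the process $P_z$-a.s.\ for q.e.\ $z$; the decomposition is therefore determined uniquely up to $P_m$-indistinguishability of the additive functionals. The main obstacle in this approach is not the bookkeeping in Steps~2--3 but the non-trivial construction in Step~1, which rests on the Ray-type compactification of $E$ adapted to the quasi-regular setting as carried out in \cite{MR92}; once the quasi-homeomorphism is granted, the preservation of the Dirichlet form under $j$ automatically guarantees that both the finite energy of $M^{[u]}$ and the zero energy of $N^{[u]}$ survive the transfer, since these energies are intrinsic invariants computed from $\mathcal{E}(u,u)$ and the Revuz measures attached to the form.
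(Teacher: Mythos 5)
Your proposal is correct and follows essentially the same route as the proof the paper points to (\cite{MR92}, Chapter VI, Theorem 2.5, building on \cite{AMR92c}): there the result is obtained precisely by the quasi-homeomorphism (``transfer'') method, reducing the quasi-regular case to Fukushima's classical decomposition for regular Dirichlet forms on locally compact separable metric spaces \cite{Fu80}, \cite{FOT11} and transporting the martingale and zero-energy parts back. Your outline, including the preservation of exceptional sets, energies and additive-functional structure under the quasi-homeomorphism, matches that argument.
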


For the proof of this theorem see \cite{Fu80}, \cite{FOT11} in the locally compact case and \cite{AMR92c}, \cite{MR92} (Chapter VI, Theorem 2.5) in the general case. \\
For the notions of martingale resp.\ continuous zero-energy additive functional see \cite{MR92}, \cite{Alb03} (roughly $E^z \left( \left( M_t^{[u]} \right)^2 \right) < \infty$, $E^z \left( M_t^{[u]} \right) = 0$, $\mathcal{E}$-q.e.\ $z \in E$, $\forall t \geq 0$; $M_t^{[u]}$ is a martingale under $P_x$ for any $x \in E - N$, with $N$ a properly exceptional set; $\lim_{t \downarrow 0} \frac{1}{2t} E^m \left( \left( N_t^{[u]} \right)^2 \right) = 0$, $E^z \left( \left| N_t^{[u]} \right| \right) < \infty$ q.e.\ $z \in E$). \\
Let us conclude this section with a short discussion of problems of stochastic dynamics. \\
Given a probability measure $\mu$ on some space $E$ one can ask the question whether there exists a Markov process $\boldsymbol{M}$ with Markov transition semigroup $p_t$, $t \geq 0$, $\mu$-symmetric (in the sense that the adjoint $p_t^{\ast}$ of $p_t$ in $L^2(\mu)$ coincides with $p_t$, for all $t \geq 0)$. In particular, then $\mu$ is $p_t$ invariant i.e.
\begin{equation}
	\int p_tu \, \mathrm{d}\mu = \int u \, \mathrm{d}\mu \text{,} \quad \forall u \in L^2 (\mu) \text{.}
	\label{eqn3}
\end{equation}
One calls then $p_t$ the ``stochastic dynamics'' associated with $\mu$. \\
A probability measure $\mu$ is said to be infinitesimal invariant under a $C_0$-semigroup $T_t$, $t \geq 0$, if $\int Lu \, \mathrm{d}\mu = 0$ for all $u$ in a subset of the domain $D(L)$ of $L$, which is dense in $L^2(E;\mu)$, where $L$ is the infinitesimal generator of $T_t$. 

In general, however, infinitesimal invariance of $\mu$ does not imply its invariance for the corresponding semigroup, i.e.\ does not imply (\ref{eqn3}). There are many counterexamples known (see e.g.\ \cite{BStR00} or \cite{BKRS14}). In Section \ref{sec:5} we shall give an explicit (Gaussian) counterexample with state space being a Hilbert space, in which even the underlying martingale problem is well-posed. \\
For references on invariant and infinitesimally invariant measures see, e.g., \cite{ABR99}, \cite{AFe04}, \cite{AFe08}, \cite{AKR97a}, \cite{AKR97b}, \cite{AKR98}, \cite{AKR02}, \cite{ARW}, \cite{AR02}, \cite{MR10}, \cite{MZ10}, \cite{ARZ93b}, \cite{BoRZh}, \cite{DPZa}, \cite{E99}, \cite{BKRS14}. \\
To the above ``inverse problem'' there exists a direct problem: given a (Markov) stochastic process $\boldsymbol{M}$, find (if possible) a probability measure $\mu$ s.t.\ $\mu$ is is an invariant measure for $\mu$ in the sense that (\ref{eqn3}) holds. 

We shall see in Section 4 how these problems are solved in the case of the processes associated with a special class of Dirichlet forms, namely classical Dirichlet forms (in the sense of \cite{AR90}, \cite{MR92}, \cite{Alb03}, \cite{AKR02}). 

A further general question related with stochastic dynamics concerns the uniqueness of the invariant measure for a given stochastic process $\boldsymbol{M}$, see, e.g., \cite{DPZa}, \cite{PeZ}, \cite{Eche} for results on this problem. \\
Let us now briefly recally the relation of Dirichlet forms with martingale problems, related uniqueness problems, and large time asymptotic behaviour of associated semigroups. \\
Let us consider a topological space $E$, a set $\mathcal{A}$ of real-valued functions on $E$ and a linear operator $L$ defined on $\mathcal{A}$. A probability measure $P$ on a probability space $\Omega$ consisting of continuous paths on $E$ with possible finite life time is said to be a solution of the martingale problem for $(L, \mathcal{A})$ relative to a coordinate stochastic process $X_t$, $t \geq 0$ on $E \cup \{ \Delta \}$ ($\Delta$ being the terminal point and $X_t (\omega) = \omega (t)$, $\omega \in \Omega$, with some initial distribution $\mu$) if for all $f \in \mathcal{A}$ and $t \geq 0$ we have that $f(X_t)$ and $Lf(X_t)$ are in $L^1(P)$, the function $(\omega,s) \to (Lf) (X_s(\omega))$ is in $L^1(P \otimes \mathrm{d}s)$ for all $0 \le s \le t$, and
\begin{equation}
	M_t^{[f]} := f(X_t) - \int_0^t (Lf) (X_s) \mathrm{d}s
	\nonumber
\end{equation}
is a martingale with the given initial probability measure $\mu$ with respect to the filtration generated by $X_s$, for all $0 \le t < \infty$. \\
For a general discussion of martingale problems see \cite{StV}, \cite{EK86}, \cite{MR92}, \cite{AR89a}, \cite{AR90b}, \cite{ARZ93b}, \cite{E99}, \cite{BhK93}. 

The Markov uniqueness problem concerns the question whether in the class of Markov processes $X_t$, $t \geq 0$ the solution of the martingale problem with generator $L$ acting in $L^2(\mu)$ is already determined by restricting the generator to a subset $\mathcal{A}$ strictly contained in the $L^2(\mu)$-domain of $L$, but still dense in $L^2(\mu)$. See \cite{AHK82}, \cite{E99}, \cite{AR90b}, \cite{BK95}, [AuR02], \cite{SS03}, \cite{Tak92}. An example that Markov uniqueness can fail to hold is given in Section \ref{sec:5}, which is originally from \cite{E99}, but which is presented here in an updated form. \\
The strong uniqueness problem concerns the question whether $L$ is already essentially self-adjoint on $\mathcal{A}$ in $L^2(\mu)$, see \cite{AKR02}, \cite{AKR92}, \cite{AKR93}, \cite{LR98}. \\
The large time asymptotics of processes associated with Dirichlet forms can be discussed in terms of properties of the associated Dirichlet forms. \\
We recall some basic results in this direction. \\
Let $(T_t)$, $t \geq 0$, be a submarkovian $C_0$-contraction semigroup in $L^2(m)$. $T_t$ is called irreducible if $T_t(uf) = uT_tf$, for all $f \in L^{\infty}(m)$ and all $t > 0$ implies $u = \text{constant}$ $m$-a.e.. A submarkovian $C_0$-contraction semigroup $T_t$ in $L^2(m)$ is called $L^2(m)$-ergodic if $T_tu \to \int u \, \mathrm{d}m$ as $t \to \infty$ in $L^2(m)$, for all $u \in L^2(m)$. 

Let $\mathcal{E}$ be a symmetric Dirichlet form associated with an $m$-symmetric submarkovian $C_0$-contraction semigroup $T_t$ in $L^2(m)$. Then the statement that $T_t$ is $L^2(m)$-ergodic is equivalent with $T_t$ being irreducible and is also equivalent with $\mathcal{E}$ irreducible. These properties are also equivalent with the statement that $u \in D(L)$ and $Lu= 0$ imply $u = \text{const}$ $m$-a.e., where $L$ is the infinitesimal generator of $T_t$. \\
For the proof of these results see \cite{AKR97a} (and also \cite{AHK76}, \cite{AKR97b}). \\
For the connection of the above properties with properties of the right process associated to $T_t$ resp.\ $\mathcal{E}$ via Dirichlet forms theory see Section 4.3 (and, for related problems, in the case of non local Dirichlet forms, e.g., [AR{\"u}02], [AR{\"u}05]).


 \section{Classical Dirichlet forms on Banach spaces \newline and weak solutions to SDE}
 Let $E$ be a separable real Banach space with dual $E^{'}$ and dualization $\sideset{_{E^{'}}^{}}{_{E}^{}}{\mathop{\langle \;,\; \rangle}} \; $. Let $\mathcal{B}(E)$ denote its Borel $\sigma$-algebra and let $\mu$ be a finite positive
 measure on $(E,\mathcal{B}(E))$ with $\mathrm{supp}[\mu]=E$. Define for $K\subset E^{'}$ the linear spaces
 \begin{align*}
  \mathcal{F}C_b^\infty(K):=\{f(l_1, \dots , l_m)|m\in \mathbb{N}, f\in C^\infty_b(\mathbb{R}^m), l_1, \dots, l_m \in K\}.  \tag{3.1}
 \end{align*}
Set $\mathcal{F}C_b^\infty:=\mathcal{F}C_b^\infty(E^{'})$. Compared with the finite dimensional case we have that $E,\; \mu,\; \mathcal{F}C_b^\infty(K)$ replace $\mathbb{R}^d,\; \mathrm{d}x, \; C_0^\infty(\mathbb{R}^d)$
respectively, where $\mathrm{d}x=$\ Lebesgue measure. We want to define a gradient $\nabla$. To this end fix $u=f(l_1, \dots, l_m)\in \mathcal{F}C_b^\infty, z \in E$ and define for $k\in E$, $s \in \mathbb{R}$:
\begin{equation*}
 \frac{\partial u}{\partial k}(z):= \frac{\mathrm{d}}{\mathrm{d}s} u(z+sk)_{s=0}= \sum_{i=0}^m \frac{\partial f}{\partial x_i}(l_1(z), \dots, l_m(z)) ~ \sideset{_{E^{'}}^{}}{_{E}^{}}{\mathop{\langle l_i,k \rangle}}. \tag{3.2}
\end{equation*}
Furthermore, we assume that we are given a ``tangent space'' $H$ to $E$ at each point, in the sense that $H$ is a separable real Hilbert space such that $H\subset E$ continuously and densely. 
Thus, identifying $H$ with its dual $H^{'}$ we have
\begin{equation*}\tag{3.3}
 E^{'}\subset H\subset E \textnormal{  continuously and densely}. 
\end{equation*}
We define $\nabla u(z)$ to be the unique element in $H$ such that
\begin{equation*}
 \langle \nabla u(z), h\rangle_H = \frac{\partial u}{\partial h }(z) \textnormal{ for all } h\in H(\subset E).
\end{equation*}
Now it is possible to define a positive definite symmetric bilinear form (henceforth briefly called $form$) on (real) $L^2(E;\mu)$ by
\begin{equation*}\tag{3.4}\label{3.4}
 \mathcal{E}_{\mu}^0 (u,v):=\frac{1}{2} \int_E \langle \nabla u, \nabla v \rangle_H \mathrm{d} \mu ; \  \ u,v\in \mathcal{F}C^\infty_b,
\end{equation*}
which is densely defined, since $\mathcal{F}C^\infty_b$ is dense in $L^2(E;\mu)$ (cf. \cite[Chap. 4, Sect.b)]{MR92}). An element $k\in E$ is called \textit{well-$\mu$-admissible} if there exists $\beta_k^\mu \in L^2(E;\mu)$ such that
\begin{equation*}\tag{3.5}\label{3.5}
 \int_E u\;  \mathrm{d} \mu = - \int_E u \; \beta_k^\mu \; \mathrm{d} \mu \textnormal{  for all  } u \in \mathcal{F}C_b^\infty.
\end{equation*}
Assume:
\begin{gather*} \tag{3.6}\label{3.6}
  \textnormal{There exists a linear subspace } K \textnormal{ of } E^{'}(\subset H\subset E), \quad \qquad \qquad \qquad \; \; \\
 \textnormal{point separating on }E \textnormal{ and consisting} \textnormal{ of well-} \mu \textnormal{-admissible elements in }E.
\end{gather*}
which we fix from now on in this section. Then it is easy to see that the form $(\mathcal{E}_{\mu}^0, \mathcal{F}C^\infty_b)$ defined in \eqref{3.4} is closable on 
$L^2(E;\mu)$ and that its closure $(\mathcal{E}_{\mu}, D(\mathcal{E}_{\mu}))$ is a Dirichlet form on $L^2(E;\mu)$ (cf. \cite{AR90}, \cite[Chap.II, Subsection 3b]{MR92}).
We also denote the closure of $\nabla$ with domain $D(\mathcal{E}_{\mu})$ by $\nabla$, hence 
\begin{equation*} \tag{3.7} \label{3.7}
 \mathcal{E}_\mu (u,v) = \frac{1}{2}\int_E\langle \nabla u, \nabla v \rangle_H ~\mathrm{d} \mu; ~~ u,v\in D(\mathcal{E}_\mu).
\end{equation*}
The Dirichlet form in \eqref{3.7} is called \textit{classical (gradient) Dirichlet form} given by $\mu$ (see \cite{AR90}, \cite{MR92}).
Let $L_\mu$ with domain $D(L_\mu)$ be its generator (i.e. $\mathcal{E}_{\mu}(u,v) = (u, (-L_{\mu})v)$, with $(\, , \,)$ the scalar product in $L^2(E;\mu)$, $u \in D(\mathcal{E}_{\mu})$, $v \in D(L_{\mu})$) (cf. \cite[Chap. I]{MR92}). $(L_\mu, D(L_\mu))$ is a \textit{Dirichlet operator}, i.e., $(e^{tL_\mu})_{t\ge0}$ is sub-Markovian.
It is immediate that if $u=f(l_1,\dots,l_m) \in \mathcal{F}C_b^\infty(K) \textnormal{ and } K_0\subset K$ is an orthonormal
basis of $H$ having $l_1,\dots,l_m$ in its linear span, then $u\in D(L_\mu)$ and
\begin{equation*}\tag{3.8}\label{3.8}
 L_\mu u = \frac{1}{2} \sum_{k\in K_0} (\frac{\partial}{\partial k}(\frac{\partial u}{\partial k})+\beta_k^\mu \frac{\partial u}{\partial k}).
\end{equation*}

\begin{thm}\label{3.1}
 The Dirichlet form $(\mathcal{E}_\mu, D(\mathcal{E}_\mu))$ defined in \eqref{3.7} is local and quasi-regular.
\end{thm}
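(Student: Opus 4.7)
The plan is to verify locality and the three defining properties of quasi-regularity separately, exploiting that $\mathcal{F}C_b^\infty(K)$ is, by construction, a core for $(\mathcal{E}_\mu,D(\mathcal{E}_\mu))$ on which everything is explicit.

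For locality, I would first establish the local property of the closed gradient, namely $\nabla u=0$ $\mu$-a.e.\ on $\{u=0\}$ for every $u\in D(\mathcal{E}_\mu)$. On the core $\mathcal{F}C_b^\infty$ this is immediate from the chain-rule formula for $\partial u/\partial k$ displayed in (3.2), and it extends to all of $D(\mathcal{E}_\mu)$ by approximating $u$ with $\phi_\varepsilon(u)$ for $\phi_\varepsilon\in C_b^1(\mathbb{R})$ flattened near $0$, via the identity $\nabla\phi_\varepsilon(u)=\phi_\varepsilon'(u)\nabla u$ and the closability of $\nabla$. Locality of $\mathcal{E}_\mu$ is then immediate: if $u,v\in D(\mathcal{E}_\mu)$ satisfy $\operatorname{supp}(|u|\cdot\mu)\cap\operatorname{supp}(|v|\cdot\mu)=\varnothing$, then $\mu$-a.e.\ every point lies either outside $\operatorname{supp}(|u|\cdot\mu)$, where $u=0$ and hence $\nabla u=0$, or outside $\operatorname{supp}(|v|\cdot\mu)$, where analogously $\nabla v=0$, so the integrand in \eqref{3.7} vanishes.

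For quasi-regularity, condition (3) is the easy part: extracting a countable point-separating subfamily $\{l_n\}_{n\in\mathbb{N}}\subset K$, the functions $\tilde u_n:=\arctan l_n$ lie in $\mathcal{F}C_b^\infty(K)\subset D(\mathcal{E}_\mu)$, are continuous on all of $E$, hence trivially quasi-continuous, and separate its points. Condition (2) becomes automatic once (1) is in place, since $\mathcal{F}C_b^\infty(K)$ is a core for $\mathcal{E}_\mu$ and each of its elements is everywhere continuous, hence its own quasi-continuous $\mu$-version.

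The real content lies in condition (1), the production of an $\mathcal{E}_\mu$-nest of compact subsets of $E$, and this is where I expect the main obstacle. Tightness of $\mu$ on the Polish space $E$ (Ulam's theorem) yields compacts $C_n\subset E$ with $\mu(E\setminus C_n)<2^{-n}$, but to get an $\mathcal{E}_\mu$-nest I need the $(\mathcal{E}_\mu)_1$-capacity of the complements to shrink, not merely their $\mu$-measure. My plan is to build cylinder cutoffs $\chi_n=\psi_n(l_{n_1},\ldots,l_{n_{k_n}})\in\mathcal{F}C_b^\infty(K)$ with $0\le\chi_n\le 1$ and $\chi_n\equiv 1$ on $C_n$, controlling $\mathcal{E}_\mu(\chi_n,\chi_n)$ directly from the explicit formula for $\nabla$ on the core together with the well-$\mu$-admissibility (3.5) of the $l_j$'s, and then intersecting the level sets $\{\chi_n=1\}$ with the tightness compacts to produce the $F_n$. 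The delicate issue, unlike in $\mathbb{R}^d$, is that cylindrical slabs $\{|l_j|\le R:j\le k\}$ are never compact in an infinite-dimensional $E$, so purely cylindrical truncations cannot give compactness; the point-separating hypothesis on $K$ is what ensures that on any compact $C\subset E$ the subspace topology is induced by finitely many $l_n$'s, which is exactly what allows a cylindrical cutoff to ``see'' $C_n$ faithfully while tightness supplies the genuine compactness. This construction is carried out for classical gradient Dirichlet forms in \cite{AR90} and \cite[Chap.~IV]{MR92}, and the plan is to invoke that machinery here.
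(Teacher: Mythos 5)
Your treatment of locality and of conditions (2) and (3) is sound and essentially standard: the energy-measure argument (chain rule on the core, extension by closedness, $\nabla u=0$ $\mu$-a.e.\ on $\{u=0\}$, then the disjoint-supports dichotomy) is exactly the mechanism behind the reference the paper gives for locality (\cite[Chap.\ V, Example 1.12 (ii)]{MR92}), and your observations that $\mathcal{F}C_b^\infty(K)$ consists of globally continuous functions (so (2) is automatic --- in fact it needs no compact nest at all) and that $\arctan l_n$ for a countable separating subfamily settles (3) agree with the standard treatment. Note that the paper itself proves the whole theorem by citation, referring quasi-regularity to \cite{RS92} and \cite[Chap.\ IV, Sect.\ 4b]{MR92}, so the substantive comparison is with the proof in \cite{RS92}.

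For condition (1), however, the mechanism you sketch has a genuine gap and would not go through. To bound $\operatorname{cap}(E\setminus F)$ you must exhibit $w\in D(\mathcal{E}_\mu)$ with $w\ge 1$ quasi-everywhere on (a neighbourhood of) $E\setminus F$ and $\mathcal{E}_1(w,w)$ small. A cylindrical cutoff $\chi_n\in\mathcal{F}C_b^\infty(K)$ can never supply this: $1-\chi_n$ dominates $1$ only off a cylindrical slab, which is never compact, and intersecting $\{\chi_n=1\}$ with Ulam compacts $C_n$ leaves the set $E\setminus C_n$ entirely unwitnessed --- its capacity is not controlled by its measure, and indeed the tightness of the capacity (as opposed to that of $\mu$) is precisely the nontrivial content of \cite{RS92}, not something derivable from Ulam's theorem. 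Your supporting topological claim is also false: on an infinite-dimensional compact the topology is induced by the \emph{whole} countable separating family $\{l_n\}$, never by finitely many functionals (finitely many have a common kernel of infinite codimension meeting the compact nontrivially). Moreover, well-$\mu$-admissibility plays no role in estimating $\mathcal{E}_\mu(\chi_n,\chi_n)$ --- the energy of a cylinder function is explicit from \eqref{3.7} without any integration by parts; admissibility enters only through closability. The actual construction in \cite{RS92} uses genuinely non-cylindrical functions: taking $\{z_m\}$ dense in $E$, one forms $w_n:=1\wedge\min_{m\le n}\|\cdot-z_m\|_E$, realized as a countable supremum of affine functions $\langle \ell,\cdot-z_m\rangle$ with $\|\ell\|_{E'}\le 1$, whose $H$-gradients are uniformly bounded because $E'\subset H$ continuously; then $w_n\downarrow 0$ in $L^2(E;\mu)$ with $\sup_n\mathcal{E}_\mu(w_n,w_n)<\infty$, a Banach--Saks/Ces\`aro argument upgrades this to $\mathcal{E}_1^{1/2}$-convergence along convex combinations, giving $\operatorname{cap}(\{w_{n_k}>\varepsilon_k\})$ summably small, and the nest consists of the sets $F:=\bigcap_k\{w_{n_k}\le\varepsilon_k\}$, which are closed, totally bounded (countable intersections of finite unions of small balls), hence compact. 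So your closing appeal to \cite{AR90} and \cite[Chap.\ IV]{MR92} points at the right machinery, but the route you describe to replace it would fail at exactly the step you correctly identified as the crux.
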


\begin{proof}
See \cite[Chap. 5, Example 1.12 (ii)]{MR92} for the locality and for the quasi-regularity see \cite{RS92} and also  \cite[Chap. 4, Sect. 4b]{MR92}.
\end{proof}

Hence by Section 2 there exists a (Markov) diffusion process $\textbf{M}_\mu=(\Omega, \mathcal{F}, (\mathcal{F}_t), (X_t), (P_z)_{z\in E})$ \textit{ properly associated with} 
the Dirichlet form $\mathcal{E}_\mu, D(\mathcal{E}_\mu))$, i.e., for all $u\in L^2(E;\mu), ~ t>0,$
\begin{equation*}\tag{3.9}\label{3.9}
 \int u(X_t) \; \mathrm{d} P_z = T_t u(z) \textnormal{ for } \mu - \textnormal{a.e. } z \in E
\end{equation*}
where $T_tu:=\exp(tL_\mu)u$, and the function on the left hand side of \eqref{3.9} has an $\mathcal{E}_\mu$-quasi continuous version. 
$\textbf{M}_\mu$ is easily seen to be conservative (i.e., has infinite lifetime) and to have $\mu$ as an invariant measure.
By \cite[Chap. 4, Sect. 4b]{MR92} it follows that there is a point separating countable $\mathbb{Q}$-vector space $D\subset \mathcal{F}C_b^\infty(K)$.
It is easy to see that there exists an ONB $K_0 \subset K$ of $H$ such that the linear span of $K_0$ separates the points in $E$.
Let us make for what follows a fix choice of such sets $K_0$ and $D$. Then as an immediate consequence of (a special case of)
the Fukushima decomposition (see Sect.2) (in particular we use the version of \cite{MR92}, Chapter VI, Theorem 2.5), we obtain the following result concerning the martingale problem for $(L_\mu,D)$:

\begin{thm}\label{3.2}
 There exists a set $S \in \mathcal{B}(E)$ such that $E\backslash S$ is \textbf{properly $\mathcal{E}_\mu$-exceptional}, i.e. $\mu(E\backslash S)=0$ and $P_z[X_t \in S \; \forall t\ge 0]=1 \textnormal{ for all }z\in S$,
 and for all $u\in D$ (see above) 
 \begin{equation*}
  u(X_t)-\int^t_0 L_\mu u(X_s)\; \mathrm{d} s, ~~ t\ge 0,
 \end{equation*}
is an $(\mathcal{F}_t)$-martingale under $P_z$ for all $z\in S$. 
\end{thm}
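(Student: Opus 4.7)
The plan is to apply Theorem~2.1 (Fukushima's decomposition in the quasi-regular setting) to each $u$ in the countable set $D$, identify the zero-energy continuous additive functional explicitly with $\int_0^{\cdot} L_\mu u(X_s)\,\mathrm{d}s$, and then patch together the countably many exceptional sets into a single properly $\mathcal{E}_\mu$-exceptional complement. A preliminary observation that simplifies matters is that every $u \in D \subset \mathcal{F}C_b^\infty(K)$ is continuous and bounded on $E$, so $u$ already is its own quasi-continuous $\mu$-version; moreover, by the discussion preceding the theorem, $u \in D(L_\mu)$ with $L_\mu u$ given by \eqref{3.8}, and I would fix once and for all a Borel representative of $L_\mu u$.

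For each $u \in D$, Theorem~2.1 yields a martingale additive functional $M^{[u]}$ of finite energy and a continuous additive functional $N^{[u]}$ of zero energy such that $u(X_t) - u(X_0) = M^{[u]}_t + N^{[u]}_t$ $P_z$-a.s., with $M^{[u]}$ a $P_z$-martingale, for $\mathcal{E}_\mu$-quasi every $z \in E$. Because $u \in D(L_\mu)$, the general theory of quasi-regular Dirichlet forms (cf.\ \cite[Chap.~VI]{MR92}) identifies the Revuz measure of $N^{[u]}$ with $L_\mu u \cdot \mu$; since this measure is absolutely continuous with respect to $\mu$, the Revuz correspondence pins down $N^{[u]}_t$ pathwise as $\int_0^t L_\mu u(X_s)\,\mathrm{d}s$ $P_z$-a.s.\ for $\mathcal{E}_\mu$-q.e.\ $z$. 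Rearranging, the process in the statement equals $u(X_0) + M^{[u]}_t$ and is therefore a $P_z$-martingale off some $\mathcal{E}_\mu$-exceptional set $N_u \subset E$.

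To upgrade the exceptional set so that it is uniform in $u$ and the process is trapped inside $S$, I would set $N_0 := \bigcup_{u \in D} N_u$, which is $\mathcal{E}_\mu$-exceptional by countability of $D$, and enlarge it to a Borel properly $\mathcal{E}_\mu$-exceptional superset $N$ via the standard construction available in this framework (cf.\ \cite[Chap.~IV]{MR92}). The set $S := E \setminus N$ then satisfies $\mu(E \setminus S) = 0$ and $P_z[X_t \in S\ \forall\, t \ge 0] = 1$ for all $z \in S$ by definition of properly exceptional, and the martingale identity holds under $P_z$ for every $z \in S$ and every $u \in D$, which is the claim.

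The main obstacle I anticipate is the Revuz identification step: one must verify that $|L_\mu u|\cdot \mu$ is a smooth measure of finite energy integral so that the Revuz correspondence is in force in the infinite-dimensional quasi-regular setting at hand, and one must fix a Borel (not just $L^2(\mu)$-equivalence-class) version of $L_\mu u$, since the logarithmic derivatives $\beta_k^\mu$ entering \eqref{3.8} are defined only $\mu$-a.e. Once these technicalities are settled, countability of $D$ together with the standard construction of properly exceptional Borel supersets is precisely what converts the a priori quasi-everywhere martingale statement into the desired pointwise statement on all of $S$.
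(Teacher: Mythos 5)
Your proposal is correct and follows essentially the same route as the paper, which obtains Theorem 3.2 precisely as an immediate consequence of (a special case of) the Fukushima decomposition in the quasi-regular setting (\cite[Chap.\ VI, Theorem 2.5]{MR92}, i.e.\ Theorem 2.1 here), using that each $u\in D\subset D(L_\mu)$ so that the zero-energy part is $\int_0^t L_\mu u(X_s)\,\mathrm{d}s$, and exploiting the countability of $D$ to combine the exceptional sets into one properly $\mathcal{E}_\mu$-exceptional Borel complement $E\setminus S$. Your write-up merely makes explicit the steps (quasi-continuity of $u$, identification of $N^{[u]}$, enlargement to a properly exceptional Borel set) that the paper delegates to the cited reference.
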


As a consequence of Theorem 3.2 and the results in \cite{AR91} we obtain that $\textbf{M}_\mu$ yields weak solutions to the corresponding stochastic differential equation (SDE) on $E$. 
More precisely, we have:

\begin{thm}
 Let S be as in Theorem 3.2 and assume that there exists a $\mathcal{B}(E)/\mathcal{B}(E)$-measurable map $\beta^\mu:E\longrightarrow E$ such that
 \begin{itemize}
  \item[(i)] $\sideset{_{E^{'}}^{}}{_{E}^{}}{\mathop{\langle k, \beta^\mu \rangle}}= \beta^{\mu}_k ~~\mu$-a.e for each $k\in K$;
  \item[(ii)] $\int_E || \beta^\mu ||^2_E\; \mathrm{d} \mu < \infty$.
 \end{itemize}
Then there exists an $E$-valued $(\mathcal{F}_t)_{t\ge0}$-Wiener process $W_t, ~t\ge0, \textnormal{ on } (\Omega, \mathcal{F},P)$ such that 
$(\sideset{_{E^{'}}^{}}{_{E}^{}}{\mathop{\langle k, W_t \rangle}})_{k \in K_0}$ is a cylindrical Wiener process in $H$ and for every $z\in S$
\begin{equation*}
 X_t = z + \frac{1}{2} \int_0^t \beta ^\mu (X_s) \; \mathrm{d} s + W_t , ~~t\ge0,
\end{equation*}
$P_z$-a.s. (where the integral is an $E$-valued Bochner integral). 
\end{thm}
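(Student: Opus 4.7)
The plan is to recover the driving noise $W_t$ from the Fukushima decomposition applied to the coordinate functionals $k \in K_0$ of the diffusion $\textbf{M}_\mu$ furnished by Theorem \ref{3.1}. The key observation is that if $k \in K_0$ then formula \eqref{3.8} collapses to
\begin{equation*}
L_\mu k \;=\; \tfrac{1}{2}\beta_k^\mu,
\end{equation*}
since $\partial u / \partial k'$ is constant when $u = k$ is linear, so the pure second-order term in \eqref{3.8} vanishes. Because $k$ is unbounded and hence not literally an element of $\mathcal{F}C_b^\infty(K)$, one has to apply Theorem \ref{3.2} to truncations $\varphi_n(k) \in \mathcal{F}C_b^\infty(K)$ with $\varphi_n \in C_b^\infty(\mathbb{R})$ agreeing with the identity on $[-n,n]$, and then pass to the limit using (ii) and the $\mu$-invariance of $\textbf{M}_\mu$ to conclude that, after shrinking $S$ to a properly $\mathcal{E}_\mu$-exceptional subset (still denoted $S$) which serves all countably many $k \in K_0$ simultaneously,
\begin{equation*}
M_t^{[k]} \;:=\; k(X_t) - k(z) - \tfrac{1}{2} \int_0^t \beta_k^\mu(X_s) \, \mathrm{d}s
\end{equation*}
is an $(\mathcal{F}_t)$-martingale under $P_z$ for every $z \in S$ and every $k \in K_0$.

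The next step is to compute the covariation of these martingales. Using the standard formula for the quadratic variation of a Fukushima martingale in the classical gradient setting,
\begin{equation*}
\mathrm{d}\langle M^{[u]}, M^{[v]}\rangle_s \;=\; \langle \nabla u, \nabla v\rangle_H(X_s) \, \mathrm{d}s,
\end{equation*}
combined with $\nabla k = k \in H$ (via $E' \subset H$) and the ONB property of $K_0$, I obtain $\langle M^{[k]}, M^{[k']}\rangle_t = \delta_{k, k'} \, t$. I then simply define
\begin{equation*}
W_t \;:=\; X_t - z - \tfrac{1}{2} \int_0^t \beta^\mu(X_s) \, \mathrm{d}s
\end{equation*}
as an $E$-valued Bochner integral; the $P_z$-a.s.\ existence of this integral, on a properly exceptional refinement of $S$, follows from (ii) together with $\mu$-invariance of $\textbf{M}_\mu$. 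Pairing against $k \in K$ and using hypothesis (i) yields $\sideset{_{E^{'}}^{}}{_{E}^{}}{\mathop{\langle k, W_t \rangle}} = M_t^{[k]}$, and then L\'evy's characterisation applied to the covariation computation above shows that $(\sideset{_{E^{'}}^{}}{_{E}^{}}{\mathop{\langle k, W_t \rangle}})_{k \in K_0}$ is a cylindrical Wiener process in $H$.

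What remains is the upgrade from ``cylindrical Wiener process along $K_0$'' to ``genuine $E$-valued Wiener process'', and this is the main technical obstacle, since such a lift is not automatic in a general Banach space. The saving grace is that $W_t$ is already $E$-valued and path-continuous \emph{by construction} from $X_t$ and the Bochner integral of $\beta^\mu$, so one does not have to radonify the cylindrical process onto $E$; one only has to promote its finite-dimensional distributions to a full joint Gaussian statement for $W_t$. This is achieved by exploiting the $H$-density and point-separation property of $K_0$ on $E$ together with an approximation argument of the kind carried out in \cite{AR91}, which yields Gaussianity of $\sideset{_{E^{'}}^{}}{_{E}^{}}{\mathop{\langle l, W_t \rangle}}$ for every $l \in E'$ and thereby the stationary-independent-increment Gaussian structure of $W_t$ in $E$.
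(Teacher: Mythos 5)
The paper proves this theorem simply by the citation \cite[Theorem 6.10]{AR91}, and your reconstruction --- the identity $L_\mu u=\tfrac12\beta_k^\mu$ on truncated linear coordinates $\varphi_n(\langle k,\cdot\rangle)$, the covariation computation $\langle M^{[k]},M^{[k']}\rangle_t=\delta_{kk'}t$ from the energy measure of the gradient form, L\'evy's characterisation, and the definition $W_t:=X_t-z-\tfrac12\int_0^t\beta^\mu(X_s)\,\mathrm{d}s$ with the lift to a genuine $E$-valued Wiener process via the approximation argument of \cite{AR91} --- follows essentially the same route as that cited proof. The technical points you flag (enlarging the countable set served by the properly exceptional set, using (ii) plus $\mu$-invariance for the $P_z$-a.s.\ existence of the Bochner integral, and invoking (i), which holds only $\mu$-a.e., after a further exceptional refinement of $S$) are precisely the ones the cited proof addresses, so I see no gap.
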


\begin{proof}
 \cite[Theorem 6.10]{AR91}.
\end{proof}


\section{Applications to stochastic quantization \newline in finite and infinite volume}

Quantum field theory has its origin in physics (by work by Born, Dirac, Heisenberg, Jordan, Pauli, a.a., see, e.g., \cite{Jo}) as an attempt to quantize the classical theory of relativistic fields in a similar way as non relativistic quantum mechanics is the quantization of classical (non relativistic) field theory, see, e.g., [ASe14]. \\
In contrast to non relativistic quantum mechanics, a mathematical sound construction of the dynamics encountered difficulties which are still not overcome. A systematic attempt to perform such a construction was initiated in the late 60's and culminated in the early 70's with the construction of models describing interactions, in the case of an idealized $d$-dimensional space-time world (for $d=1,2,3$), see \cite{S74}, \cite{GJ81}, \cite{AFHKL86}. For the physically most interesting case $d=4$ only partial results have been obtained, see \cite{AGW97}, \cite{AGY05}. \\
In these approaches a non Gaussian probability measure $\mu$ on a distributional space (e.g.\ $\mathcal{S}'(\mathbb{R}^d)$) is constructed having the heuristic expression:
\begin{equation}
	\mu^* (\mathrm{d}\varphi) = Z^{-1} e^{- \int_{\mathbb{R}^d} v(\varphi(x)) \mathrm{d}x} \mu_0^{\ast} (\mathrm{d}\varphi) \text{,} \quad \text{with} \ Z = \int_{\mathcal{S}' (\mathbb{R}^d)} e^{- \int_{\mathbb{R}^d} v (\varphi(x)) \mathrm{d}x} \mu_0^{\ast} (\mathrm{d}\varphi) \text{,}
	\label{eqn1}
\end{equation}
$\varphi \in \mathcal{S}' (\mathbb{R}^d)$ being a symbol for a space-time process connected with the physical quantum field in $d$-space-time dimensions. $v$ is a real-valued function, describing the interaction. $\mu_0^{\ast}$ is a Gaussian random field, with mean $0$ and covariance $E(\varphi(x) \varphi(y)) = (-\Delta + m^2)^{-1} (x,y)$, the fundamental solution or Green function of $-\Delta + m^2$, $m$ being a positive constant (in physical terms $m$ is the mass of particles described by the free field, below we shall always take for simplicity $m=1$). \\
For $v\equiv0$ we have heuristically $\mu^* = \mu_0^{\ast}$. A particularly well studied case for $v\not\equiv0$ is where $v(y) = P(y)$, $y \in \mathbb{R}^d$, with $P$ a polynomial with a positive even highest order term. $\mu^{\ast}$ is then called ``Euclidean measure for the $P(\varphi)_d$-field'' (for more general $v$ $\mu^{\ast}$ yields the ``Euclidean measure for the $v(\varphi)_d$-field''). \\
The rigorous construction of $\mu^{\ast}$ uses tools of probability theory and statistical mechanics, see \cite{S74}, \cite{GRS75}, \cite{GJ81}, \cite{AFHKL86}. \\
The analogy of the heuristic formula (\ref{eqn1}) with the (canonical, Gibbs) equilibrium measure in statistical mechanics makes it natural to ask, both from physics and mathematics, whether it is possible to find a Markov diffusion process ($X_t$, $t \geq 0$) such that (\ref{eqn1}) appears as equilibrium measure (invariant measure) for this process. A formal computation can be performed to see that a solution $X_t$ of the following heuristic stochastic quantization equation
\begin{equation}
	\mathrm{d}X_t = (\Delta - 1) X_t \, \mathrm{d}t - v'(X_t) \mathrm{d}t + \mathrm{d}W_t, \; t\ge0,
	\label{eqn2}
\end{equation}
would have $\mu^*$ as an (heuristic) invariant measure, where $\mathrm{d}W_t$ stands for Gaussian white noise in all variables $(t,x)$, $x \in \mathbb{R}^d$. In the physical interpretation, $t$ here is a ``computer time'', whereas the Euclidean space-time-variables $x\in \mathbb{R}^d$ appear in
$X_t=X_t(x)$. Heuristically, $X_0(x) = \varphi(x)$, with $\varphi$ as in (\ref{eqn1}). $\Delta=\Delta_x$ is the Laplacian in $\mathbb{R}^d$.
The physical reason for asking the above question is to exploit the dynamics of solutions of such an equation to perform Monte-Carlo simulations of physically relevant averages with respect to the equilibrium measure $\mu^*$ (see \cite{PaWu81}, and, e.g., \cite{Mit89}). The mathematical interest arises because of the ``typicality'' of an equation of the form (\ref{eqn2}) for handling SPDE's with singular noise. Due to the singular character of the measure $\mu^*$ (whenever it exists!) one expects (for $d \geq 2$) troubles in giving a meaning to the term $v'(X_t)$ in (\ref{eqn2}). In this section we shall see that for $d=2$ this programme can be rigorously achieved (the case $d=1$ is simpler, since for $d=1$, $x \to X_t(x)$, $x \in \mathbb{R}$, is continuous). For our discussion we shall separate 2 cases: 
the ``finite volume case'' (Section 4.1), where $\mathbb{R}^2$ is replaced by a rectangle $\Lambda$ and the ``infinite volume case'' (Section 4.2), where the ``space cut-off'' $\Lambda$ is eliminated and the problem is considered in the whole space $\mathbb{R}^2$.


\subsection{Finite volume}
Let $\Lambda$ be an open rectangle in $\mathbb{R}^2$. Let $(-\Delta+1)_N$ be the generator of the following quadratic form on 
$L^2(\Lambda ; \mathrm{d} x):(u,v) \longrightarrow\int_\Lambda \langle \nabla u, \nabla v \rangle_{\mathbb{R}^d}\; \mathrm{d} x + \int_\Lambda u\;v\; \mathrm{d}x$ with $u, v \in \{g\in L^2 (\Lambda; \mathrm{d} x)| \nabla g \in L^2(\Lambda; \mathrm{d} x)\}$
(where $\nabla$ is in the sense of distributions, $N$ reminds us to ``Neumann boundary conditions''). Let $\{e_n | n\in \mathbb{N} \} \subset C^\infty(\bar \Lambda)$ be the (orthonormal) eigenbasis of $(-\Delta + 1)_N$ and
$\{\lambda_n |\in \mathbb{N} ~~(\subset ]0,\infty[)$ the corresponding eigenvalues (cf. \cite[p. 266]{RS78}). Define for $\alpha \in \mathbb{R}$ 
\begin{equation*}\tag{4.1}
 H_\alpha := \{ u \in L^2(\Lambda; \mathrm{d} x)| \sum_{n=1}^\infty \lambda ^\alpha _n\; \langle u, e_n\rangle^2_{L^2(\Lambda ; \mathrm{d} x)} < \infty \},
\end{equation*}
equipped with the inner product
\begin{equation*} \tag{4.2} \label{eqn4.2}
 \langle u, v \rangle_{H_\alpha} := \sum_{n=1}^\infty \lambda^\alpha_n \; \langle u, e_n \rangle_{L^2(\Lambda; \mathrm{d} x)}\langle v,e_n\rangle_{L^2(\Lambda;\mathrm{d} x )}.
\end{equation*}
Clearly, we have that 
\begin{equation*}\tag{4.3}
H_\alpha =\begin{cases}
           \text{completion of } C^\infty(\bar \Lambda) \text{ w.r.t. } \| \; \|_{H^\alpha} \text{ if } \alpha < 0\\
           \text{completion of } C_0^\infty(\Lambda) \text{ w.r.t. } \| \; \|_{H_\alpha} \text{ if } \alpha \ge 0
          \end{cases}
 \end{equation*}
(cf. \cite[p. 79]{LM72} for the latter). \\ To get into the framework of Section 3 we chose
\begin{equation*}
 H:=L^2(\Lambda; \mathrm{d} x), ~~~ E:=H_{-\delta}, \ \delta>0.
\end{equation*}
Then
\begin{equation*}\tag{4.4}\label{4}
 E^{'}( ~=H_\delta)~~\subset H ~~ \subset E.
\end{equation*}
\begin{rem}
 In \eqref{4} we have realized the dual of $H_{-\delta}$ as $H_\delta$ using as usual the chain 
 \begin{equation*}
  H_\alpha ~~\subset L^2(\Lambda ; \mathrm{d} x)~~\subset H_{-\alpha}, ~~~~\alpha \ge 0.
 \end{equation*}
 \end{rem}
Fix $\delta >0$. Since $\sum_{n=1}^\infty \lambda_n^{-1-\delta} < \infty$ (cf.\ [RS75]), we have, applying \cite[Theorem 3.2]{Y89} (i.e., the Gross-Minlos-Sazonov theorem) with
$H:=L^2(\Lambda ; \mathrm{d} x ), ~||\cdot|| := || \cdot||_{H_{-\delta}}, ~ A_1:= (-\Delta +1)_N^{-\frac{\delta}{2}}$, and $A_2:=(-\Delta +1)_N^{-\frac{1}{2}}$, 
that there exists a unique mean zero Gaussian probability measure $\mu$ on $E:=H_{-\delta}$ (called \textit{free field} on $\Lambda$ with Neumann boundary conditions; see \cite{N73})
such that 
\begin{equation*}\tag{4.5}\label{4.4}
 \int_E \sideset{_{E^{'}}^{}}{_{E}^{}}{\mathop{\langle l, z \rangle^2}} \;\mu (\mathrm{d}z)=||l||^2_{H_{-1}} \textnormal{ for all } l\in E^{'} = H_\delta.
\end{equation*}
Clearly, $\mathrm{supp}\;\mu = E$. For $h\in H_{-1}$ we define $X_h\in L^2(E;\mu)$ by
\begin{equation*}\tag{4.6}\label{4.5}
 X_h:= \lim_{n\rightarrow \infty} \sideset{_{E^{'}}^{}}{_{E}^{}}{\mathop{\langle k_n, \cdot \rangle}}~~\textnormal{ in } L^2(E;\mu),
\end{equation*}
where $(k_n)_{n\in \mathbb{N}}$ is any sequence in $E^{'}$ such that $k_n \underset{n\rightarrow \infty}\longrightarrow h$ in $H_{-1}$.

Let $h \in \mathrm{Dom}((-\Delta+1)_N) ~~(\subset L^2(\Lambda;\mathrm{d} x) \subset E)$, then 
\[\sideset{_{E^{'}}^{}}{_{E}^{}}{\mathop{\langle k, h \rangle}}=\langle k,h \rangle_{L^2(\Lambda, \mathrm{d} x)} = \langle k, (-\Delta+1)_N h \rangle_{H_{-1}}\]
for each $k \in E^{'}$. Hence by \cite[Proposition 5.5]{AR90}
\eqref{4.4}, and \eqref{4.5}, $h$ is well-$\mu$-admissible and $\beta^\mu_h=X_{(-\Delta+1)_Nh}$ (see \eqref{3.5}). 
 Let $K$ be the linear span of $\{e_n|n\in \mathbb{N}\}$. Below we shall consider the gradient Dirichlet form $(\mathcal{E}_{\bar \mu}, D(\mathcal{E}_{\bar \mu}))$ on $L^2(E;\bar \mu)$
 as introduced in Section 3, where $\bar \mu$ is the $P(\varphi)_2$-quantum field measure in the finite volume $\Lambda$ with $P$ being a polynomial, see below just before Proposition \ref{prop4.2}, for its mathematical description.\\
 To this end we first have to introduce the so-called Wick powers $:z^n:,~n\in \mathbb{N}$, which are renormalized powers of the Schwartz-Sobolev distribution $z\in E=H_{-\delta}$.\\
 Let $h\in L^2(\Lambda;\mathrm{d} x), ~ n\in \mathbb{N}$, and define $:z^n:(h)$ as a limit in $L^p(E;\mu), ~p\in \; [1,\infty[$, as follows (cf., e.g., \cite[Sect. 8.5]{GJ81}): fix $n \in \mathbb{N}$ 
 and let $H_n(t),~t\in \mathbb{R}$, be the $n$th Hermite polinomial, i.e., $H_n(t) := \sum_{m=0}^{[n/2]} (-1)^m \alpha_{nm} t^{n-2m}$, 
 with\\ $\alpha_{nm} := n!/[(n-2m)!2^mm!]$. Let $d \in C_0^\infty(\mathbb{R}^2), ~ d\ge 0, ~\int d(x) \mathrm{d} x =1$, and $d(x)=d(-x)$ for each $x\in \mathbb{R}^2$. Define for 
 $k \in \mathbb{N}, ~d_{k,x}(y):=2^{2k}d(2^k(x-y)); ~~x,y \in \mathbb{R}^2$. Let $z_k(x):= \sideset{_{E^{'}}^{}}{_{E}^{}}{\mathop{\langle d_{k,x}, z \rangle}}\;, \;z\in E, x\in \Lambda$, and set
 \begin{equation*}\tag{4.7}
  :z_k^n:(x):=c_k(x)^{n/2}H_n(c_k(x)^{-1/2}z_k(x)),
 \end{equation*}
where $c_k(x):=\int z_k(x)^2\mu (\mathrm{d}z)$. Then it is known that 
\[:z^n_k:(h):=\int:z_k^n:(x) h(x) \mathrm{d} x \underset{k\rightarrow \infty}\longrightarrow:z^n:(h)\]both in every 
$L^p(E;\mu), ~ +\infty>p\ge1$, and for $\mu$-a.e. $z\in E$ (cf.,e.g., \cite[Sect. 3]{R86} for the latter and \cite{S74} for the former). The map $z\mapsto \limsup_{k\rightarrow\infty} :z^n_k:(h)$ is then a $\mu$-version of $:z^n:(h)$. 
From now on $:z^n:(h)$ shall denote this particular version. Since $H_n(s+t)=\sum_{m=0}^n \binom{n}{m} H_m(s)t^{n-m}, ~s,t \in \mathbb{R}$, we also have that if 
\[z\in M:=\{z\in E|\limsup_{k\rightarrow \infty} :z_k^n(h): \;=\lim_{k\rightarrow \infty} :z_k^n:(h) (\in \mathbb{R})\}\] then $\mu(M)=1$, $z+k\in M$ for all $k\in K$ and
\begin{equation*}\tag{4.8}
 :(z+k)^n:(h) = \sum_{m=0}^n \binom{n}{m} :z^m:(k^{n-m} h)
\end{equation*}
(cf. \cite[Sect. 3]{R86} for details).\\
Now fix $N\in \mathbb{N}, a_n \in \mathbb{R}, ~ 0\leq n\leq2N$ with $a_{2N}>0$ and define 
\begin{equation*}
 V(z):=:P(z):(1_\Lambda):=\sum_{n=0}^{2N} a_n :z^n:(1_\Lambda), ~~~z\in E,
\end{equation*}
where $1_\Lambda$ denotes the indicator function of $\Lambda$. Let 
\begin{equation*}\tag{4.9}
\varphi :=\exp (-\frac{1}{2} V).
\end{equation*}                                                                                                                                                                                                 
Then $\varphi >0$ $\mu$-a.e. and $\varphi \in L^p(E;\mu)$ for all $p \in \; [1, \infty[$ (cf., e.g., \cite[Sect. 5.2]{S74} or \cite[Sect. 8.6]{GJ81}).
Set
\[\bar \mu := \varphi^2 \cdot \mu.\]
\begin{prop}\label{prop4.2}
 Each $k \in K$ is well-$\mu$-admissible and 
 \begin{equation*}\tag{4.10}\label{4.10}
  \beta^{\bar \mu}_k=-X_{[(-\Delta +1)_Nk]}-\sum_{n=1}^{2N} na_n :z^{n-1}:(k)
 \end{equation*}
(cf. \eqref{3.5} above for the definition of $\beta^{\bar \mu}_k$).
\end{prop}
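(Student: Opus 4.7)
The strategy is to combine the Gaussian integration-by-parts formula for $\mu$ (stated just before the proposition) with the density change $\bar\mu=e^{-V}\mu$, using the translation identity \eqref{4.8} to produce the nonlinear correction. Formally, for $u\in\mathcal{F}C_b^\infty$ and $k\in K$, one writes
\begin{equation*}
\int_E \frac{\partial u}{\partial k}\, d\bar\mu = \int_E \frac{\partial(ue^{-V})}{\partial k}\, d\mu + \int_E u\, \frac{\partial V}{\partial k}\, d\bar\mu,
\end{equation*}
and, if the IBP formula for $\mu$ may be applied to the non-cylindrical function $ue^{-V}$, the first integral on the right equals $-\int u\beta_k^\mu\, d\bar\mu$. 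Differentiating \eqref{4.8} in $s$ at $s=0$ (only the $m=n-1$ term survives) yields $\partial_k :z^n:(1_\Lambda) = n\, :z^{n-1}:(k)$, hence
\begin{equation*}
\frac{\partial V}{\partial k} = \sum_{n=1}^{2N} n a_n :z^{n-1}:(k),
\end{equation*}
and combining this with the known expression for $\beta_k^\mu$ produces formula \eqref{4.10}.

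To justify IBP for the non-cylindrical factor $ue^{-V}$ I would approximate $V$ by the regularised polynomials
\begin{equation*}
V_\ell(z):=\sum_{n=0}^{2N}a_n\, :z_\ell^n:(1_\Lambda),\qquad z_\ell(x)=\langle d_{\ell,x},z\rangle,
\end{equation*}
which, after a Riemann-sum discretisation of the integral over $\Lambda$, belong to $\mathcal{F}C_b^\infty(K)$, so that $ue^{-V_\ell}$ is smooth and cylindrical and the IBP for $\mu$ applies to it directly. The Hermite identity $H_n'=nH_{n-1}$ gives $\partial_k V_\ell = \sum_n n a_n :z_\ell^{n-1}:(k_\ell)$, where $k_\ell$ is the mollification of $k$, and thus
\begin{equation*}
\int \frac{\partial u}{\partial k}\, e^{-V_\ell}\, d\mu = -\int u\,\beta_k^\mu\, e^{-V_\ell}\, d\mu + \sum_{n=1}^{2N} n a_n \int u\, :z_\ell^{n-1}:(k_\ell)\, e^{-V_\ell}\, d\mu.
\end{equation*}
Passage to the limit $\ell\to\infty$ uses the $L^p(\mu)$-convergence of the regularised Wick powers already recalled in the paper, together with $e^{-V_\ell}\to e^{-V}$ in every $L^p(\mu)$; a Hölder argument lets every term pass to the limit and yields \eqref{4.10} in the sense of \eqref{3.5}.

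Membership $\beta_k^{\bar\mu}\in L^2(E;\bar\mu)$ is then immediate from a further Hölder estimate, using that $X_{(-\Delta+1)_N k}$ is Gaussian, each $:z^{n-1}:(k)$ lies in every $L^p(\mu)$, and $\varphi\in L^p(\mu)$ for every finite $p$. The principal obstacle is the convergence $e^{-V_\ell}\to e^{-V}$ in $L^p(\mu)$: since the $V_\ell$ are not uniformly bounded below in $\ell$, this rests on the Nelson-type uniform exponential integrability bounds of constructive $P(\varphi)_2$-theory, which is the only substantial analytic input; the remaining algebraic content of the proposition reduces to the chain rule combined with \eqref{4.8}.
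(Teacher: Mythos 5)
Your proposal is correct in substance, but note that the paper itself gives no in-text proof: Proposition 4.2 is proved by the single citation \cite[Proposition 7.2]{RZ92}. What you have written is essentially a reconstruction of the standard argument behind that citation. R\"ockner--Zhang package it slightly differently: they first prove a general perturbation result, namely that if $\varphi\in D(\mathcal{E}_\mu)$ with $\varphi>0$ $\mu$-a.e., then $\bar\mu=\varphi^2\mu$ inherits admissibility with $\beta_k^{\bar\mu}=\beta_k^{\mu}+2\varphi^{-1}\,\partial\varphi/\partial k$, and then verify for the $P(\varphi)_2$ weight $\varphi=e^{-V/2}$ that $\varphi\in D(\mathcal{E}_\mu)$ and $\partial\varphi/\partial k=-\tfrac12\varphi\,\partial V/\partial k$ --- which is proved by exactly the Wick-power regularisation, Hermite identity $H_n'=nH_{n-1}$, and Nelson-type uniform exponential integrability that you invoke. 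So the two routes coincide in all analytic content; yours merely fuses the perturbation lemma and its verification into one limiting integration-by-parts computation, which buys self-containedness at the cost of redoing in this special case what \cite{RZ92} proves once in general.

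Three points in your sketch deserve tightening, though none is fatal. First, the mollifier functionals $d_{\ell,x}$ need not lie in $E'=H_\delta$ (for $\delta$ large the latter encodes Neumann-type boundary behaviour), and your Riemann-sum discretisation of $\int_\Lambda :z_\ell^n:(x)\,\mathrm{d}x$ introduces a second limit you do not control; it is cleaner to approximate $V$ by its conditional expectations given the finite-dimensional projections onto $\mathrm{span}\{e_1,\dots,e_M\}$, which are manifestly $\mathcal{F}C_b^\infty(K)$-measurable (boundedness of $e^{-V_M}$ and all its derivatives holds for each fixed approximation because the Wick polynomial is bounded below there, but \emph{not} uniformly --- which is precisely why Nelson's bound is needed, as you correctly say). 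Second, your limit $:z_\ell^{n-1}:(k_\ell)\to\,:z^{n-1}:(k)$ has the test function moving with $\ell$; the convergence recalled in the paper is for fixed $h$, so you need an additional (standard, but not free) bilinear estimate controlling $:z_\ell^{n-1}:(k_\ell-k)$ in $L^2(\mu)$. Third, a sign check: the paper states $\beta_h^\mu=X_{(-\Delta+1)_Nh}$ just before the proposition while (4.10) carries $-X_{[(-\Delta+1)_Nk]}$; the convention consistent with (3.5) for the centered Gaussian $\mu$ of covariance $(-\Delta+1)_N^{-1}$ is $\beta_h^\mu=-X_{(-\Delta+1)_Nh}$, which is what your ``known expression'' must be for your computation to yield (4.10). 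Your final H\"older argument for $\beta_k^{\bar\mu}\in L^2(E;\bar\mu)$, using $\varphi\in L^p(E;\mu)$ for all $p<\infty$ (\cite{S74}, \cite{GJ81}) and Gaussian integrability of the linear term, is correct as stated.
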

\begin{proof}
 \cite[Proposition 7.2]{RZ92}. 
\end{proof}
It now follows that Theorem  \ref{3.1} applies to the corresponding classical (gradient) Dirichlet form $(\mathcal{E}_{\bar \mu}, D(\mathcal{E}_{\bar \mu}))$
introduced in Section 3.

So, let $\textbf{M}_{\bar \mu}= (\Omega, \mathcal{F}, (\mathcal{F}_t), (X_t)_{t\ge 0}, (P_z)_{z\in E})$ be the corresponding (Markov) diffusion process as in Section 3.
Then Theorem \ref{3.2} applies, i.e. we have solved the martingale problem (in the sense of Theorem \ref{3.2}) for the corresponding operator $L_{\bar \mu}$ given by 
\eqref{3.8} with $\beta_k^{\bar \mu}$ replacing $\beta_k^{\mu}$. Finally, taking $\delta$ for $E=H_{-\delta}$ large enough, there exists $\beta^{\bar \mu}:E\rightarrow E,~ \mathcal{B}(E)/\mathcal{B}(E)$ measurable and 
satisfying conditions \textit{(i)} and \textit{(ii)} in Theorem 3.3 (see \cite[Proposition 6.9]{AR91} ). 
By construction and \eqref{4.10} we have that 
\begin{equation*}\tag{4.11}
 \beta^{\bar \mu}(z)= -(-\Delta+1)_N~ z\;-\; :P^{'}(z):, ~ z\in E.
\end{equation*}
Hence Theorem 3.3 implies the existence of a set $S \in \mathcal{B}(E)$ such that $\bar \mu(E\backslash S)=0$ and $S$ is invariant under the (Markov) diffusion process 
$(X_{t})_{t\ge 0}$, i.e. $P_{z}[X_{t} \in S\; \forall t\ge 0]=1$
for all $z\in S$, and for every $z\in S$
\begin{equation*}\tag{4.12}\label{4.12}
 X_t= z \; -\; \frac{1}{2} \int_0^t\; [(-\Delta +1)_N X_s\; +\; :P^{'}(X_s):] \mathrm{d} s \; + \; W_t, ~~t\ge 0,
\end{equation*}
$P_z$-a.e., for some $L^2(\Lambda^2; \mathrm{d} x)$-cylindrical $(\mathcal{F}_t)$-Wiener process on $(\Omega, \mathcal{F}, P_z)$, i.e. we have
a Markov weak solution to the SDE \eqref{4.12} (``weak'' in the probabilistic sense). 
The solution to \eqref{4.12} is usually called \textit{stochastic quantization process} for the $P(\varphi)_2$-quantum field with Neumann boundary condition in the finite volume $\Lambda$ (and (\ref{4.12}) is a rigorous ``finite volume'' version of the heuristic stochastic quantization equation (\ref{eqn4.2})).


\subsection{Infinite volume}
For $n\in \mathbb{Z}$ define the space $\mathcal{S}_n$ to be the completion of $C_0^\infty(\mathbb{R}^2)$($=$ all compactly supported smooth
functions on $\mathbb{R}^2$) with respect to the norm
\begin{equation*}
 \|k\|_n:=\Big[ \sum_{|\underline m|\leq n} \int_{\mathbb{R}}(1+|x|^2)^n\Big|\left(\frac{\partial^{m_1}}{\partial x_1^{m_1}} \frac{\partial^{m_2}}{\partial x_2^{m_2}}\right)k(x)\Big|^2 \mathrm{d} x\Big],
\end{equation*}
where $\underline m :=(m_1,m_2) \in (\mathbb{Z}_+)^2$. To get into the framework of Section 3 we choose 
\[K:=C_0^\infty(\mathbb{R}^2), \; H:=L^2(\mathbb{R}^2;\mathrm{d}x),\; E:=\mathcal{S}_{-n},\]
where $n \in \mathbb{N}$, which later we shall choose large enough. So, we have 
\[E^{'}\; \subset H\; \subset E.\]
Let $\mu_0^*$ be the space time free field of mass $1$ on $\mathbb{R}^2$, i.e. $\mu_0^*$ is the unique centered Gaussian measure on $E$ with covariance operator
$(-\Delta + 1)^{-1}$. For $h\in L^{1+\varepsilon}(\mathbb{R}^2;\mathrm{d} x)$, with $\varepsilon>0$, and $n\in \mathbb{N}$, let $:z^n:(h)$ be defined analogously as in Section 3, but with $\mu_0^*$ taking the role of $\mu$ ($=$ the free field of mass $1$ on $\Lambda$
with Neumann boundary condition).
From now on we fix $N\in \mathbb{N}, \; a_n \in \mathbb{R}, \; 0\leq n\leq2N$, and define for $h\in L^{1+\varepsilon}(\mathbb{R}^2;\mathrm{d}x)$

\begin{equation*}\tag{4.13}
 :P(z):(h):=\sum_{n=0}^{2N} a_n:z^n:(h)\; \textnormal{ with }\; a_{2N}>0.
\end{equation*}
We have that $:P(z):(h), \; \exp(-:P(z):(h))\in L^p(E; \mu_0^*)$ for all $p\in [1,\infty[$, if $h\ge 0$ (cf. e.g.\cite[$\S$V.2]{S74}), 
hence the following probability measures (called \textit{space-time cut-off quantum fields}) are well-defined for $\Lambda \in \mathcal{B}(\mathbb{R}^2), \Lambda$ bounded,
\begin{equation*}\tag{4.14}
 \mu^*_\Lambda :=\frac{\exp(-:P(z):(1_\Lambda))}{\int \exp(-:P(z):(1_\Lambda))\mathrm{d} \mu_0^*}\cdot \mu_0^*.
\end{equation*}
It has been proven that the weak limit
\begin{equation*}\tag{4.15}
\lim_{\Lambda \nearrow \mathbb{R}^2} \mu^*_\Lambda=: \; \mu^* 
\end{equation*}
exists as a probability measure on $(E,\mathcal{B}(E))$ (see \cite{GJ81} and also \cite{R86}, \cite{R88}) having moments of all orders. Furthermore, by \cite[Proposition 2.7]{AR90b} we have that
$\mathrm{supp} \mu^* = E$ (i.e. $\mu^*(U)>0$ for each open subset $U$ of $E$). It is, however, well-known that $\mu^*$ is not absolutely continuous with respect to $\mu^*_0$ (since $\mu^{\ast} \not= \mu_0^{\ast}$ is invariant under translations in the underlying $\mathbb{R}^2$-space, which act ergodically, see \cite{AHK74}, \cite{S74}, \cite{GRS75}, \cite{Fro}, \cite{AL}). Now we have:
\begin{prop}
 Each $k\in K$ is well $\mu^*$-admissible with 
 \begin{equation*}\tag{4.16}\label{4.16}
  \beta_k^{\mu^*}(z):= \;- \sum_{n=1}^{2N} na_n :z^{n-1}:(k) \; - \; \sideset{_{E^{'}}^{}}{_{E}^{}}{\mathop{\langle (-\Delta +1)k, z \rangle}}, \; z\in E.
 \end{equation*}
\end{prop}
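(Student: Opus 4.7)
\medskip

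\textbf{Plan of proof.} The strategy is to transfer the integration-by-parts identity from the finite-volume cut-off measures $\mu_\Lambda^*$ to the infinite-volume limit $\mu^*$. For each bounded $\Lambda \in \mathcal{B}(\mathbb{R}^2)$ with $\Lambda\supset\operatorname{supp}(k)$, I will first verify that $k$ is well-$\mu_\Lambda^*$-admissible with a logarithmic derivative that does not in fact depend on $\Lambda$; then I will pass to the limit $\Lambda\nearrow\mathbb{R}^2$.

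\medskip

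\textbf{Step 1 (finite-volume identity).} Because $\mu_0^*$ is centered Gaussian with covariance $(-\Delta+1)^{-1}$ and $(-\Delta+1)k\in C_0^\infty\subset E'$, the same computation as the one preceding Proposition~\ref{prop4.2} (based on \cite[Proposition~5.5]{AR90} together with the Cameron--Martin quasi-invariance of $\mu_0^*$ in the direction $k\in K\subset H_1$) shows that $k$ is well-$\mu_0^*$-admissible with
\[
\beta_k^{\mu_0^*}(z)\;=\;-\,\sideset{_{E'}^{}}{_{E}^{}}{\mathop{\langle (-\Delta+1)k,z\rangle}}.
\]
Next, using the translation formula (4.8) with $h=\mathbf 1_\Lambda$ and $\operatorname{supp}(k)\subset\Lambda$,
\[
\frac{d}{ds}\Big|_{s=0}\!:(z+sk)^n\!:(\mathbf 1_\Lambda)
\;=\;n\,:z^{n-1}\!:\!\bigl(k\cdot\mathbf 1_\Lambda\bigr)
\;=\;n\,:z^{n-1}\!:\!(k),
\]
so that the directional derivative of $V_\Lambda:=\,:P(z)\!:\!(\mathbf 1_\Lambda)$ along $k$ is $\sum_{n=1}^{2N} na_n :z^{n-1}\!:\!(k)$, independent of $\Lambda$ once $\Lambda\supset\operatorname{supp}(k)$. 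Since $\mu_\Lambda^*=Z_\Lambda^{-1}e^{-V_\Lambda}\mu_0^*$, a standard product-rule argument (cf.\ \cite[Prop.~7.2]{RZ92}) gives for every $u\in\mathcal{F}C_b^\infty$
\[
\int_E \frac{\partial u}{\partial k}\,d\mu_\Lambda^*
\;=\;-\int_E u\cdot \beta_k^{\mu^*}\,d\mu_\Lambda^*,
\]
with $\beta_k^{\mu^*}$ exactly as in \eqref{4.16}; crucially, the right-hand-side integrand does not depend on $\Lambda$.

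\medskip

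\textbf{Step 2 (passage to the limit and $L^2$-integrability).} The left-hand side converges: $\partial u/\partial k\in C_b(E)$ and $\mu_\Lambda^*\to\mu^*$ weakly. For the right-hand side one needs convergence of integrals of the (unbounded) functional $u\cdot\beta_k^{\mu^*}$. This follows from the standard $P(\varphi)_2$ estimates: $\langle(-\Delta+1)k,\cdot\rangle$ and the Wick monomials $:z^{n-1}\!:\!(k)$ lie in $\bigcap_{p\geq 1}L^p(\mu_0^*)$ with norms controlled uniformly on bounded test functions, while the Radon--Nikodym densities $Z_\Lambda^{-1}e^{-V_\Lambda}$ converge in every $L^q(\mu_0^*)$ (see the references \cite{GJ81}, \cite{R86}, \cite{R88} already invoked for the existence of $\mu^*$). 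Hence $\int f\,d\mu_\Lambda^*\to\int f\,d\mu^*$ for $f\in\bigcup_p L^p(\mu_0^*)$, which covers $u\cdot\beta_k^{\mu^*}$. Finally, since $\mu^*$ has moments of all orders (cited in the excerpt), $\beta_k^{\mu^*}\in\bigcap_p L^p(\mu^*)\subset L^2(\mu^*)$, establishing well-$\mu^*$-admissibility with the claimed form of $\beta_k^{\mu^*}$.

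\medskip

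\textbf{Main obstacle.} The delicate point is the last step: $\beta_k^{\mu^*}$ is not continuous on $E$ (the Wick powers are only defined as $\mu_0^*$-a.s.\ limits), so weak convergence of $\mu_\Lambda^*$ does not a priori yield convergence of $\int\beta_k^{\mu^*}\,d\mu_\Lambda^*$. The needed strengthening comes from the classical $L^p(\mu_0^*)$-control of Wick polynomials combined with uniform $L^q(\mu_0^*)$-bounds for the densities $Z_\Lambda^{-1}e^{-V_\Lambda}$, both of which are by now standard ingredients of constructive $P(\varphi)_2$; having those in place, the rest of the argument is essentially algebraic.
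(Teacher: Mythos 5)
Your Step 1 is essentially sound: the finite-volume integration by parts, the Cameron--Martin computation for the Gaussian part, and the observation via (4.8) that the drift integrand is independent of $\Lambda$ once $\Lambda\supset\operatorname{supp}(k)$ are all correct, and they are indeed the natural starting point. But Step 2 contains a genuine and fatal gap: you assert that the densities $Z_\Lambda^{-1}e^{-V_\Lambda}$ converge in every $L^q(\mu_0^*)$, and you integrate $\mu_0^*$-classes of functions against $\mu^*$ as if this were legitimate. Both claims contradict a fact the paper states explicitly just before the proposition: $\mu^*$ is \emph{not} absolutely continuous with respect to $\mu_0^*$ (by translation invariance and ergodicity of the translation action). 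If $Z_\Lambda^{-1}e^{-V_\Lambda}$ converged even in $L^1(\mu_0^*)$, the limit measure would be $\rho\,\mu_0^*$ for the limit density $\rho$, i.e.\ $\mu^*\ll\mu_0^*$ --- false. For the same reason the statement ``$\int f\,\mathrm{d}\mu_\Lambda^*\to\int f\,\mathrm{d}\mu^*$ for $f\in\bigcup_p L^p(\mu_0^*)$'' is not even well posed: an $L^p(\mu_0^*)$-equivalence class has no canonical $\mu^*$-integral when $\mu^*\not\ll\mu_0^*$. This also undermines your final integrability claim: knowing that $\mu^*$ has all moments controls $\langle(-\Delta+1)k,\cdot\rangle$, but $:z^{n-1}:(k)$ is only defined through an a.e.\ limiting procedure (the $\limsup$-version of the mollified Wick powers), and its $\mu^*$-a.e.\ finiteness and $L^2(\mu^*)$-membership do not follow from its $L^p(\mu_0^*)$-properties.

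What actually closes the gap --- and this is how \cite[Theorem 7.11]{AR91}, to which the paper delegates the entire proof, proceeds --- is \emph{uniformity in $\Lambda$} rather than global densities: one shows, using Nelson's hypercontractive estimates and the local structure of the interaction, that the mollified approximations $:z^{n-1}_j:(k)$ converge to $:z^{n-1}:(k)$ in $L^p(\mu_\Lambda^*)$ with bounds uniform in $\Lambda$ (the relevant Radon--Nikodym control is only \emph{local}, over a neighbourhood of $\operatorname{supp}(k)$, where absolute continuity does hold; cf.\ also \cite[Sect.~3]{R86}). Combined with weak convergence $\mu_\Lambda^*\to\mu^*$ applied to the \emph{continuous} approximants, this yields convergence of $\int u\,\beta_k\,\mathrm{d}\mu_\Lambda^*$ to $\int u\,\beta_k\,\mathrm{d}\mu^*$ and simultaneously that $\beta_k^{\mu^*}\in L^2(E;\mu^*)$. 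So your overall architecture (finite volume plus limit) is the right one, but the limiting mechanism you invoke cannot work as stated and must be replaced by the uniform-in-$\Lambda$, locally-based estimates of \cite{AR91}.
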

\begin{proof}
 \cite[Theorem 7.11]{AR91}.
\end{proof}
As a consequence of Proposition 4.2, according to Section 3 we obtain the corresponding classical (gradient) Dirichlet form $(\mathcal{E}_{\mu^*},D(\mathcal{E}_{\mu^*}))$, which is quasi regular
by Theorem 3.1. So, let $\textbf{M}_{\mu^*}=(\Omega, \mathcal{F}, (\mathcal{F}_t), (X_t)_{t\ge 0}, (P_z)_{z\in E})$
be the coresponding (Markov) diffusion process as in Section 3. Then Theorem 3.2 applies, i.e. we have solved the martingale problem (in the sense of Theorem 3.2) for the corresponding operator
$L_{\mu^*}$ given by \eqref{3.8} with $\beta^{\mu^*}_k$, as given by (\ref{4.16}), replacing $\beta^\mu_k$. 

Finally, taking $n$ large enough, there exists $\beta^{\mu^*} :E\rightarrow E,\; \mathcal{B}(E)/\mathcal{B}(E)$-measurable and satisfying condition \textit{(i)} and \textit{(ii)} in Theorem 3.3 (see \cite[Proposition 6.9]{AR91}).
By construction and \eqref{4.16} we have that
\begin{equation*}\tag{4.17}
 \beta^{\mu^*}(z)= (\Delta -1)z \;-\; :P^{'}(z):, \; z\in E.
\end{equation*}
Hence Theorem 3.3 implies the existence of a set $S\in \mathcal{B}(E)$ such that $\mu^*(E\backslash S)=0$ and $S$ is invariant under the (Markov) diffusion process $(X_t)_{t\ge 0}$, i.e.
$P_z[X_t\in S \; \forall t\ge 0]=1$ for all $z\in S$, and, for every $z\in S$, $X_t$ solves the stochastic integral equation
\begin{equation*}\tag{4.18}\label{4.18}
X_t = z+ \frac{1}{2} \int_0^t \Big[(\Delta -1) X_s \; -\; :P^{'}(X_s):\Big]\mathrm{d} s \;+\; W_t,\; t\ge 0,
 \end{equation*}
$P_z$-a.s. for some $L^2(\mathbb{R}^2; \mathrm{d} x)$-cylindrical $(\mathcal{F}_t)$-Wiener process on $(\Omega, \mathcal{F}, P_z)$, i.e. we have a Markov weak solution to the SDE \eqref{4.18}
(``weak'' in the probabilistic sense). The solution to \eqref{4.18} is usually called \textit{stochastic quantization process} for the $P(\varphi)_2$-quantum field in infinite volume (thus, with $\Lambda$ in Section 4.1 replaced by $\mathbb{R}^2$).

\begin{rem}
 All the above results also hold for the ``time zero quantum fields'' associated with the $P(\varphi)_2$-quantum field in infinite volume, first discussed in \cite{AHK74}. For details on this we refer to \cite[Section 7, II b)]{AR91}.
\end{rem}


\subsection{Ergodicity}
The use of Dirichlet form techniques is not limited to settling existence questions for solutions of SDE's with very singular coefficients.
Also important special properties of solutions can be deduced. As one instance we consider the situation of the previous subsection, and ask about ergodic 
properties of the solution.

To start, we first mention that the construction of the infinite volume $P(\varphi)_2$-quantum field $\mu^*$, which we took as a reference
measure in the previous section, is quite specific. There is, in fact, quite a large set of possible reference measures that could 
replace $\mu^*$, namely all so-called \textit{Guerra-Rosen-Simon $P(\varphi)_2$-quantum fields} (see \cite{GRS75}), which are defined as the convex 
set $\mathcal{G}$ of all Gibbs measures for a certain specification, i.e. they are defined through the classical Dobrushin-Landford-Ruelle
equations appropriate to $P(\varphi)_2$-quantum fields. We do not go into details here and do not give the precise definition of the relevant specification, but rather refer to \cite{R86} 
(see also \cite[Section 4.3]{AKR97b}). We only recall from \cite{R86} that each $\nu$ from the convex set $\mathcal{G}$ can be represented as an integral over the set $\mathcal G_{ex}$ of 
all extreme points of $\mathcal{G}$. Furthermore, we recall that by the main result in \cite{AR89a} (see also \cite{AKR97b}) for every $\nu\in\mathcal{G}$ 
the corresponding form \eqref{3.4}, with $\nu$ replacing $\mu$, is closable on $L^2(E;\nu)$, so its closure $(\mathcal{E}_\nu,D(\mathcal{E}_\nu))$ 
(see \eqref{3.7}) is a classical (gradient) Dirichlet form for which all results from Section 4.2 apply with $\nu$ replacing $\mu^*$.
Then the following is a special case of \cite[Theorem 4.14]{AKR97b}.
\begin{thm}\label{4.4}
Suppose $\nu\in\mathcal{G}_{ex}$. Then $(\mathcal{E}_\nu,D(\mathcal{E}_\nu))$ is irreducible and (equivalently) the corresponding (Markov) diffusion 
process $\textbf{M}_\nu$ is time ergodic under \[P_\nu :=\int_E P_z \; \nu(\mathrm{d}z).\]
\end{thm}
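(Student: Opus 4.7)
The plan is to exploit the general equivalence between irreducibility of a symmetric Dirichlet form and time ergodicity of its associated process (recalled at the end of Section 2), and then to prove the irreducibility by combining the "gradient = 0" characterization of harmonic functions for classical Dirichlet forms with a $0$–$1$ law coming from the extremality of $\nu$ as a Gibbs measure.

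First, by the results recalled at the end of Section 2 (taken from \cite{AKR97a}), applied to the submarkovian $C_0$-contraction semigroup $(T_t^\nu)_{t\ge 0}$ on $L^2(E;\nu)$ associated with $(\mathcal{E}_\nu,D(\mathcal{E}_\nu))$, the following are equivalent: (a) $(\mathcal{E}_\nu,D(\mathcal{E}_\nu))$ is irreducible; (b) $T_t^\nu$ is $L^2(E;\nu)$-ergodic; (c) every $u\in D(L_\nu)$ with $L_\nu u=0$ is constant $\nu$-a.e. Moreover, (b) translates into time ergodicity of $\mathbf{M}_\nu$ under $P_\nu$ via the standard identification of stationary events with invariant functions of the semigroup. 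So it suffices to verify (c).

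Next, I would reduce (c) to a statement about functions with vanishing gradient. If $u\in D(L_\nu)$ and $L_\nu u=0$, then $\mathcal{E}_\nu(u,u) = (u,-L_\nu u)_{L^2(\nu)}=0$. Using the representation \eqref{3.7} of $\mathcal{E}_\nu$ via the closed gradient, this forces $\nabla u = 0$ in $H$ for $\nu$-a.e. $z\in E$. Hence we must show: for every $u\in D(\mathcal{E}_\nu)$ with $\nabla u = 0$ $\nu$-a.e., $u$ is $\nu$-a.e. constant.

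The heart of the proof is to link "vanishing gradient" with "tail measurability". I would proceed in two steps. First, using the well-$\nu$-admissibility of every $k\in K$ (which for Guerra--Rosen--Simon $P(\varphi)_2$ measures is established in \cite{AR89a}) together with \eqref{3.5}, one shows by integration by parts that $u$ with $\nabla u=0$ is, for each $k\in K$, $\nu$-a.e.\ invariant under the translation $z\mapsto z+tk$ for all $t\in\mathbb{R}$. Since $K=C_0^\infty(\mathbb{R}^2)$ consists of compactly supported functions, this translates into $u$ being $\nu$-a.e.\ invariant under arbitrary local perturbations and hence measurable, modulo $\nu$-null sets, with respect to the tail $\sigma$-algebra $\mathcal{T}:=\bigcap_{\Lambda\subset\mathbb{R}^2\text{ bounded}}\sigma(\pi_{\Lambda^c})$ associated with the DLR specification defining $\mathcal{G}$. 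Second, I invoke the standard extremality/triviality dichotomy for Gibbs measures: $\nu\in\mathcal{G}_{ex}$ if and only if $\mathcal{T}$ is $\nu$-trivial (for $P(\varphi)_2$-quantum fields this is Föllmer's / Georgii-type $0$--$1$ law, used in \cite{AKR97b}, cf.\ \cite{R86}). Therefore $u$ is $\nu$-a.e.\ constant, which proves (c) and hence the theorem.

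The main obstacle I expect is the first step of the last paragraph: passing rigorously from $\nabla u=0$ $\nu$-a.e.\ (a statement about an $L^2$-closure of the gradient) to honest translation invariance of $u$ along every $k\in K$, and then to $\mathcal{T}$-measurability. This requires exploiting the quasi-invariance of $\nu$ under $K$-translations (via the admissibility identity \eqref{3.5} and the explicit logarithmic derivative computed for $\nu$ along the lines of Proposition 4.2), approximating $u$ by smooth cylinder functions in $\mathcal{F}C_b^\infty(K)$, and controlling the exceptional sets uniformly in the translation parameter. Once this is achieved, the extremal/tail-trivial equivalence delivers the conclusion immediately.
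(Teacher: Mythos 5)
Your proposal is correct in outline, but note first what the paper actually does: its entire ``proof'' of Theorem \ref{4.4} is the citation to \cite[Theorem 4.14]{AKR97b} (with the equivalence to time ergodicity delegated to \cite{F82} and \cite[Proposition 2.3]{AKR97a}, cf.\ Remark \ref{4.5}(ii)). What you have written is a reconstruction of the argument inside that cited reference, and its skeleton matches: the chain (a) irreducibility $\Leftrightarrow$ (b) $L^2$-ergodicity $\Leftrightarrow$ (c) $L_\nu u=0\Rightarrow u=\mathrm{const}$ is exactly the \cite{AKR97a} input recalled at the end of Section 2; the reduction $L_\nu u=0\Rightarrow\mathcal{E}_\nu(u,u)=0\Rightarrow\nabla u=0$ $\nu$-a.e.\ via (3.7) is the correct next step; and the heart of the matter is, as you say, upgrading $\nabla u=0$ to invariance of $u$ under shifts $z\mapsto z+tk$, $k\in K$, using the quasi-invariance of $\nu$ and the integration-by-parts identity (3.5) with the logarithmic derivative $\beta_k^\nu$ (closability and admissibility for all $\nu\in\mathcal{G}$ being supplied by \cite{AR89a}, as the paper notes in Section 4.3).

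The one place where you genuinely diverge from the cited proof is the final $0$--$1$ law. \cite{AKR97b} does not pass through the DLR tail $\sigma$-algebra: it works with the $\sigma$-algebra of $K$-shift-invariant events and characterizes $\mathcal{G}_{ex}$ via extreme points of the convex set of measures with the \emph{fixed logarithmic derivative family} $(\beta_k)_{k\in K}$, using \cite{R86} to identify that set with the Gibbs measures $\mathcal{G}$ for the $P(\varphi)_2$ specification; extremality then directly forces shift-invariant functions to be constant. Your route via tail-triviality (Georgii/F\"ollmer dichotomy: $\nu\in\mathcal{G}_{ex}$ iff $\mathcal{T}$ is $\nu$-trivial) is also viable, but it inserts an extra nontrivial step you assert rather than prove: that a function invariant under all shifts by $k\in C_0^\infty(\mathbb{R}^2)$ is $\mathcal{T}$-measurable mod $\nu$. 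This is true, but it requires conditioning on $\sigma(\pi_{\Lambda^c})$, using that the conditional finite-volume Gibbs measures are equivalent to Gaussian measures whose Cameron--Martin spaces contain $C_0^\infty(\Lambda)$ densely, invoking ergodicity of dense Cameron--Martin shifts to get $E_\nu[u\mid\sigma(\pi_{\Lambda^c})]=u$ for each bounded $\Lambda$, and then a reverse-martingale passage to the intersection $\mathcal{T}$; none of this is free, and the null-set management in $(t,k)$ that you flag is indeed where the technical weight sits in \cite{AKR97b} as well. So: the approach is sound and essentially the one underlying the paper's citation, with your tail-field detour being a legitimate but slightly longer variant whose missing lemma (local-shift invariance implies tail measurability) you should either prove or replace by the extreme-point characterization used in \cite{AKR97b}.
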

\begin{rem}\label{4.5} \mbox{}
\begin{itemize}
\item [(i)] In fact, if we replace $(\mathcal{E}_\nu,D(\mathcal{E}_\nu))$ by the corresponding ``maximal Dirichlet form'' 
$(\mathcal{E}_\nu^{max},D(\mathcal{E}_\nu^{max}))$ (see \cite{AKR97b} for the definition), then the irreducibility of the latter implies 
that $\nu\in\mathcal{G}$ must be in $\mathcal{G}_{ex}$.
\item [(ii)] The equivalence of the irreducibility of $(\mathcal{E}_\nu,D(\mathcal{E}_\nu))$ and the time ergodicity of $\textbf{M}_\nu$ under $P_\nu$ follows 
by the general theory (see \cite{F82}).In this case the irreducibility of $(\mathcal{E}_\nu,D(\mathcal{E}_\nu))$  is also equivalent to the $L^2$-ergodicity of the 
corresponding semigroup $(T_t^\nu)$, $t\ge 0$, i.e.\[\lim\limits_{t\rightarrow \infty}\|T_t^\nu f-f\|_{L^2(E;\nu)}=0~~~\textnormal{ \textit{for all} }~~f\in L^2(E;\nu).\]
For details we refer to \cite[Proposition 2.3]{AKR97a}. The latter paper was dedicated to Professor Masatoshi Fukushima on the occasion of his 60th birthday. A comprehensive study 
of the above relations between irreducibility of a Dirichlet form, the time ergodicity of the corresponding Markov process and the $L^2$-ergodicity of the corresponding semigroup and  other 
related properties is contained in the forthcoming paper \cite{BCR14} in a much more general context, including non-symmetric coercive and generalized Dirichlet forms.
\item[(iii)] Theorem \ref{4.4}, stated above for Guerra-Rosen-Simon Gibbs states of $P(\phi)_2$-Euclidean quantum field theory, is valid for many other 
Gibbs states of both lattice and continuum systems from statistical mechanics. We refer to \cite[Section 5]{AKR97a}, \cite{AKKR09} for lattice systems and to \cite[Section 7]{AKR98} for continuum 
systems.
\end{itemize}
\end{rem}


\subsection{Additional remarks}

\begin{enumerate}
	\item Above we discussed the stochastic quantization equation as it was first proposed in \cite{PaWu81}, where the driving noise is $\mathrm{d}W_t$ i.e.\ a Gaussian space-time white noise. The same invariant measure can also be obtained by considering an SPDE with a space-regularized noise, in which case the drift term has to be modified accordingly. E.g., in the $\varphi_2^4$-case the regularized stochastic quantization equation has been discussed in the finite volume case, with various types of regularizations, in \cite{DPT00}, \cite{HK98}, \cite{JLM85}, \cite{LR98}, \cite{GaGo}, \cite{DPD}.
	
	In particular in this finite volume case a proof of Markov uniqueness in $L^p(\mu)$, $1 \le p < \infty$ has been achieved in \cite{RZ92}. 
				Strong solutions have been constructed in suitable Besov spaces \cite{DPD}, and essential self-adjointness of generators has been proven in \cite{LR98} and \cite{DPT00}. 
The invalidity of a Girsanov formula has been shown in \cite{MiRo}. 
The discussion of uniqueness and ergodicity questions we sketched in Section 4.3 carry over to this case too. \\
The only papers, where the infinite volume processes which solve the stochastic quantization equation, in their original or regularized version, have been discussed, seem to be \cite{AR89b}, \cite{AR91}, \cite{BCM88}. The uniqueness results which hold in the finite volume case do not carry over to the infinite volume case, where the question of uniqueness of generators is still open. See, however, \cite{ARZ93a}, \cite{AR89a} for some partial results. 
	\item	The regularized stochastic quantization equation has also been discussed for other $2$ space-time dimensional models. The starting point for the construction of solutions by the Dirichlet form method, namely the measure described heuristically given by (\ref{eqn1}) resp.\ its analogue in a bounded volume, has been constructed rigorously for the case, where e.g.\ $v$ is an exponential function $v(y) = \exp(\alpha y)$ or a superposition of such functions, of the form $v(y) = \int \exp(\alpha y) \nu_v (\mathrm{d}y)$, with $\nu_v$ a bounded variation measure with $\left| \alpha \right| < \sqrt{4 \pi}$ resp.\ $\operatorname{supp} \nu_v \in (-\sqrt{4 \pi}, \sqrt{4 \pi})$, in \cite{AHK74}, (see also \cite{HK71}, \cite{S74}, \cite{FrP77}, \cite{AGHK79}, \cite{AFHKL86}, and \cite{AHPRS90a}, \cite{AHPRS90b}, \cite{HKPS93} for an alternative construction using methods of white noise analysis). \\
				The corresponding regularized stochastic quantization equation has been discussed in a bounded domain in \cite{Mi} and \cite{AKMR}. Let us point out that models of this type are presently under intensive investigation in regard to their importance in completely different areas of research, see, e.g., \cite{RhVa13}. \\
				For models given in terms of other functions $v$, e.g., superposition of trigonometric functions, like the function $v$ appearing in the $\sin \alpha \varphi_2$-model (the quantized version of the Sine-Gordon equation in $2$ space-time dimensions) constructions of the measure heuristically given by (\ref{eqn1}) as well as the definition of the dynamics in terms of Dirichlet forms have also been discussed, see \cite{AHR01}. In this work the necessity of renormalization has been shown and strong solutions have been constructed in a suitable distributional setting See also \cite{AHPRS90a}, \cite{AHPRS90b} for a white noise analysis approach to such $v$'s.
	\item	Recent work has concerned both the stochastic quantization equation in space-time dimension $d=1$ and $d=3$. \\
For $d=1$ the dynamics has been constructed in the strong probabilistic sense and $L^p$-uniqueness of the generators has been proven, see \cite{Iwa85}, \cite{Iwa87} resp.\ \cite{KR07}, for $v$ of polynomial type, resp. \cite{AKaR12}, for $v$ exponential or trigonometric type, both in finite and infinite volume. \\
In the case $d=3$ an integration by parts formula has been established for the $\varphi_3^4$-model, as well as the existence of a pre-Dirichlet form \cite{ALZ06}. 
However, the (generalized) logarithmic derivative does not seem to have good enough integrability properties. Therefore, the closability of this pre-Dirichlet form is an open problem, as is the existence of a global (Markov) dynamics. 
This might be put in relation with a recent approach to the local stochastic dynamics developed for this model by M.\ Hairer \cite{Hai14}.
\end{enumerate}


\section{Further developments}
\label{sec:5}


In applying Dirichlet form techniques to SDEs the symmetry assumption is, of course, very restrictive. Unfortunately, also the sector condition imposed 
on the (in general non-symmetric) sectorial Dirichlet forms analyzed in \cite{MR92} is still too restrictive to cover important classes of SDE, in particular on infinite
dimensional state spaces, including stochastic partial differential equations. Therefore, a theory of ``fully non-symmetric'' Dirichlet forms was developed
in \cite{Sta99}. The main feature that this ``theory of generalized Dirichlet forms'' has in common with the symmetric and sectorial case is that it still requires
a ``reference measure'' $\mu$ to be given or constructed beforehand. Below, we want to briefly describe an important subclass thereof,
where one is just given an operator $L$ and a measure $\mu$, intrinsically related to $L$ (see Definition 5.1 below),  that will serve as a ''reference measure''.
The underlying idea has been first put forward systematically in \cite{ABR99} and \cite{R98}, and was one motivation that 
has eventually led to the recent monograph \cite{BKRS14}.\\

Consider the situation described at the beginning of Section 3, so $E$ is a (real) separable Banach space, $H$ a (real) separable Hilbert space such that
$H\subset E$ continuously and densely, hence 
\[E'\subset~(H'\equiv)~H\subset~E,\] continuously and densely. Fix an algebra $K~\subset~E',$ containing a 
countable subset, separating the points of $E,$ thus $K$ generates $\mathcal{B}(E),$ i.e.
\begin{equation*}\tag{5.1}\label{5.1}
\sigma(K)=\sigma(E')=\mathcal{B}(E).
\end{equation*}
Let $L$ be a linear operator, whose domain contains $\mathcal{F}C_b^\infty(K)$ (cf. (\ref{3.1})).
\begin{defi}\label{def:5.1}
 A probability measure $\mu$ on $\mathcal{B}(E)$ is called L-infinitesimally invariant if $Lu\in L^1(E;\mu)$ for all $u~\in \mathcal{F}C_b^\infty(K)$ and 
 \begin{equation*}
  \int\limits_{E}Lu \; \mathrm{d}\mu=0 \text{ for all } u \in \mathcal{F}C_b^\infty(K),
 \end{equation*}
in short: if 
\begin{equation*}\tag{5.2}\label{5.2}
 L^*\mu=0.
\end{equation*}
\end{defi}

We note that for $\mu$ as in Definition \ref{def:5.1}, because of (\ref{5.1}), we have that $\mathcal{F}C_b^\infty(K)$ is dense in $L^p(E;\mu)$ for all
$p\in [1,\infty).$ Let us assume that $L$ is an operator of type (\ref{3.8}) or more generally a Kolmogorov or a diffusion operator (in the sense of \cite{E99}).
In this case quite general theorems are known to ensure the existence of measures $\mu$ satisfying (\ref{5.2}) (see \cite{BKRS14} and the references therein). It then
follows immediately, that $L$ is dissipative on $L^1(E;\mu)$ (see \cite[Lemma 1.8]{E99}), hence closable on $L^1(E;\mu)$. Let us denote its closure by $(L_\mu,D(L_\mu))$.
It is, however, not true in general, that $L_\mu$ generates a $C_0$-(contraction) semigroup $T_t^\mu=e^{tL_\mu}$, $t\ge 0$, on $L^1(E;\mu)$. If it does, 
then this semigroup is always sub-Markovian (see \cite{E99}). In this case one can ask whether there exists a (right) continuous Markov process on $E$ whose transition 
semigroup is related to $T_t^\mu$, $t\ge 0$, as in (\ref{3.9}). So, in summary, one can realize the ``Dirichlet form approach'' for such general Kolmogorov
operators $L$ to construct a corresponding (right) continuous Markov process in this ``fully non-symmetric'' case, if the following three problems can be solved:

\begin{itemize}
 \item [(1)] Does there exist a probability measure $\mu$ on $(E,\mathcal{B}(E))$  having the property (\ref{5.2}) in Definition 5.1?
 \item [(2)] Does $(L_\mu,D(L_\mu))$ then generate a $C_0$-(contraction) semigroup $T_t^\mu$, $t\ge 0$, on $L^1(E;\mu)$?
 \item[(3)] Does $T_t^\mu$, $t\ge 0$, come from a transition function $p_t, \; t\ge 0$, of a (right) continuous Markov?
 \end{itemize}
 
We note that if the answer to (2) is yes, then $\mu$ is also invariant for $T_t^\mu$, $t\ge 0$, i.e. 
\begin{equation*}\tag{5.3}\label{5.3}
 \int_{E}T_t^\mu u \; \mathrm{d}\mu=\int_{E} u \; \mathrm{d}\mu
\end{equation*}
for all $u\in\mathcal{F}C_b^\infty(K)$, equivalently for all $u\in L^1(E;\mu)$.
A lot of work has been done on the three problems above in the past decade.
We have already mentioned \cite{BKRS14} as a good reference for problem (1), but also problem (2) is discussed there and further references are given in \cite{BKRS14} 
concerning both (1) and (2). We only mention here that the answer ``yes'' to problem (2) is well-known to be equivalent to the ``range condition'', i.e. that 
$(1-L)(\mathcal{F}C_b^\infty(K))$ is dense in $L^1(E;\mu)$ (see e.g. \cite[Proposition 2.6]{BStR00}). But in concrete cases, in particular in infinite dimensions, this is a very
hard problem (see, however, \cite{Sta99b} for a useful characterization of the range condition if dim $E<\infty$). Concerning (3) considerable progress has
been made in \cite{BeBoR06}, \cite{BeBoR08}.

In the remainder of this subsection we want to address two important points concerning the above discussion, namely: \\
\newline
\textbf{Questions:} 
\begin{itemize}
 \item [(A)] Suppose that all problems (1)-(3) above can be solved for some $L$-infinitesimally invariant measure $\mu:=\mu_1$, which is hence invariant for $(p_t)_{t\ge0}.$
 Is it possible that there exists some other $L$-infinitesimally invariant measure $\mu_2$ which is not invariant for $(p_t)_{t\ge0}$?
 \item [(B)] Does there exist an $L$-infinitesimally invariant measure $\mu$, so that the closure of $(L, \mathcal{F}C^\infty_b(K))$ in $L^2(E;\mu)$ does not generate a $C_0$-semigroup,
 but there exist two different closed extentions generating two Markov $C_0$-semigroups on $L^2(E;\mu)$? In short: can ``Markov uniqueness'' fail to hold?
\end{itemize}

Indeed, the answer to (A) is yes.
There are well-known counterexamples $p_t, t\ge0$,  if $E=\mathbb{R}^d$ (see \cite{BKRS14} and the references therein, in particular, \cite{Sta99b}). Below we give two simple examples when $E$ is
an infinite dimensional Hilbert space, $\mu_2$ is a Gaussian measure and $(p_t)_{t\ge0}$ is even a strong Feller transition semigroup of an
Ornstein-Uhlenbeck process, i.e. is given explicitely by a Mehler-type formula. Also the answer to Question (B) is ''yes''. In fact, $\mu$ can even be chosen Gaussian and
so that $(L, \mathcal{F}C_b(K))$ is symmetric in $L^2(E;\mu)$ and both Markov $C_0$-semigroups consist of self-adjoint operators in $L^2(E;\mu)$. We shall, however, not give all details here, but refer instead to \cite{E99}
where these can be found (see Remark 5.2 below). We rather concentrate on details for the examples to answer Question (A) by ''yes''. \\

\textbf{First example:}

Consider the open interval $(0,1)\subset \mathbb{R}$ and choose
\begin{equation*}\tag{5.4}\label{5.4}
 H=E:=L^2\Big((0,1);\mathrm{d}x\Big),
\end{equation*}
with the usual inner product $\langle \;, \; \rangle$ and where $\mathrm{d}x$ denotes Lebesgue measure. Define $A_1:=-\Delta_D$, where $\Delta_D$ denotes
the Dirichlet Laplacian on $(0,1)$, and $A_2:=-\Delta_{D,N}$, where $\Delta_{D,N}$ denotes the Dirichlet-Neumann-Laplacian on $(0,1)$. 
More precisely, we take Dirichlet boundary condition at $\xi=0$ and Neumann boundary condition at $\xi=1$.
Define for $t\ge 0$, $x,y\in E$,
\begin{equation*}\tag{5.5}\label{5.5}
 p_t^{(i)}(x,\mathrm{d}y)=N(e^{-tA_i}x,Q_t^{(i)})(\mathrm{d}y),~~~~~i=1,2~~,
\end{equation*}
where $N$ denotes the Gaussian measure on $E$ with mean $e^{-tA_i}x$ and covariance operator
\begin{equation*}\tag{5.6}\label{5.6}
 Q_t^{(i)}:=\int\limits_{0}^{t}e^{-2sA_i}\mathrm{d}s~.
\end{equation*}
We note that, because we are on the one-dimensional underlying domain $(0,1)$, each $Q_t^i$ is indeed trace class. It is well-known that $p_t^{(i)}$,
$t\ge 0$, $i=1,2$, is the transition semigroup of the Ornstein-Uhlenbeck process $(X_t^{(i)})_{t\ge 0}$ solving in the mild sense the following SDE for $i=1,2$ respectively
\begin{align*}\tag{5.7}\label{5.7}
 \mathrm{d}X_t^{(i)}&=-A_iX_t^{(i)}\mathrm{d}t+\mathrm{d}W_t,~~~~~t\ge 0,\\
 X_0&=x,
\end{align*}
where $W_t$, $t\ge 0$, is an $H$-cylindrical Wiener process, and that 
\begin{equation*}\tag{5.8}\label{5.8}
 \mu_i:=N(0,\frac{1}{2}A_i^{-1}),~~~i=1,2~~,
\end{equation*}
is its respective invariant measure. 
Furthermore, it is well-known (and easy to see) that, for each $i\in \{1,2\}$, $p_t^{(i)}$, $t\ge 0$,
gives rise to a Markov $C_0$-semigroup $T_t^{\mu_i}$, $t\ge 0$, of symmetric contractions on $L^p(E;\mu_i)$ for all $p\in[1,\infty)$. 
Furthermore, for every bounded $\mathcal{B}(E)$-measurable function $f :E\rightarrow\mathbb{R}$
\begin{equation*}\tag{5.9}\label{5.9}
 p_t^{(i)}f(x):=\int\limits_E f(z)p_t^{(i)}(x,\mathrm{d}z),~~~x\in E~,
\end{equation*}
is continuous in $x\in E$, i.e. both $p_t^{(1)}$, $t\ge 0$, and $p_t^{(2)}$, $t\ge 0$, are strong Feller (see \cite{DPZa}).

Let us now consider $K:=C_0^2((0,1))$ and recall that as seen before, $\mathcal{F}C_b^\infty(K)$ is dense in $L^p(E,\mu_i)$ for all $p\in [1,\infty)$,
$i=1,2$. Then it is easy to check that for $i=1,2,\; p^{(i)}_t, t\ge0$, extends to a Markov $C_0$-semigroup $T^{\mu_i}_t, t\ge0, \textnormal{ on } L^2(E;\mu_i)$ and for the generator $(L_{\mu_i},D(L_{\mu_i}))$ of $T_t^{\mu_i}$, $t\ge 0$, on $L^2(E;\mu_i)$(but also poinwise, see \cite[Theorem 5.3]{BoR95}), 
 we have that $\mathcal{F}C_b^\infty(K)\subset D(L_{\mu_i})$ and for all
$u\in \mathcal{F}C_b^\infty(K)$, $u=F(\langle k_1,\cdot \rangle, \ldots,\langle k_N,\cdot \rangle)$, $F \in C_b^{\infty}(\mathbb{R}^N)$, $i=1,2$,
\begin{align*}\tag{5.10}\label{5.10}
L_{\mu_i}u(z)=\sum\limits_{j,j'=1}^{N}\langle k_j,k_{j'}\rangle \partial_{jj'}F(\langle k_1,z\rangle,\ldots,\langle k_N,z\rangle)\\
+\sum\limits_{j=1}^N\langle A_ik_j,z\rangle \partial_jF(\langle k_1,z\rangle,\ldots,\langle k_N,z\rangle),
\end{align*}
where $\partial_j,\partial_{j,j'}$ mean partial derivatives in the $j$-th or in the $j$-th and $j'$-th variable respectivly. Since $A_1k=A_2k$ 
for all $k\in K$, it follows that 
\begin{equation*}\tag{5.11}\label{5.11}
L_{\mu_1}u=L_{\mu_2}u \end{equation*} for all $u\in\mathcal{F}C_b^\infty(K)$.
Because of (\ref{5.11}) we may define 
\begin{equation*} \tag{5.12} \label{5.12}  Lu:=L_{\mu_1} u \;(=L_{\mu_2}  u), \; u\in \mathcal{F}C_b^\infty(K). \end{equation*}
Then because $T_t^{\mu_i}, \; t\ge0$, are symmetric in $L^2(E_i;\mu_i), \; i=1,2$, it follows that for $i=1,2$
\begin{equation*}\tag{5.13}\label{5.13}
 \int Lu \; v \; \mathrm{d} \mu_i = \int u \; Lv \;\mathrm{d}\mu_i \; \; \forall u,v \in \mathcal{F}C_b^\infty(K),
\end{equation*}
hence choosing $v\equiv1$
\begin{equation*}\tag{5.14}
 \int Lu \; \mathrm{d} \mu_i =0 \; \; \forall u\in\mathcal{F}C_b^\infty(K), 
\end{equation*}
i.e. $\mu_1$ and $\mu_2$ are $L$-infinitesimally invariant (for $L$ with domain $\mathcal{F}C_b^\infty(K)$).

\begin{rem}\mbox{}
\begin{itemize} 
 \item [(i)] We have just seen an infinite dimensional example where ``Markov uniqueness'' fails, i.e. two generators of two different Markov
 $C_0$-(contraction) semigroups which coincide on a common domain $\mathcal{F}C_b^\infty(K)$ which is dense in $L^2(E;\mu_i)$, $i=1,2$. 
 And both semigroups are even strong Feller in this case. We note, however, that in contrast to the ``classical'' Markov uniquess problem, our two $C_0$-semigroups live on different $L^2$-spaces, namely 
 $L^2(E;\mu_1)$ and $L^2(E;\mu_2)$. However, one can show that if one considers the Friedrichs extension $(L_{\mu_2,F}, D(L_{\mu_2, F}))$ of the (by (\ref{5.13}))
 symmetric operator $(L, \mathcal{F}C_b^\infty(K))$ on $L^2(E; \mu_2)$, then the corresponding Dirichlet form does not coincide with the Dirichlet form corresponding to the symmetric 
 Markov $C_0$-semigroup $(T_t^{\mu_2})_{t\ge0}$ introduced above (see \cite[Chap. 5b)1)]{E99}). Hence 
 \[\left( T_t^{\mu_2} \right)_{t\ge0}\neq\left( e^{tL_{\mu_2,F}}\right)_{t\ge0},\]
 and both are symmetric Markov $C_0$-semigroups on $L^2(E;\mu_2)$ with generators coinciding on $\mathcal{F}C_b^\infty(K)$, with $L$ defined in (\ref{5.12}).
 This is a ``true'' counter-example to Markov-uniqueness (first discorered in \cite{E99}) even for symmetric $C_0$-semigroups and (at least) one of the two semigroups is even strong Feller. 
 We stress that $\mathcal{F}C_b^\infty(K)$ is, of course, not an operator core for any of the generators of $(T_t^{\mu_2})_{t\ge0}$ and $(e^{tL_{\mu_2,F}})_{t\ge0}$ on $L^2(E;\mu).$
 \item[(ii)] Clearly, the above only occurs, because $\mathcal{F}C_b^\infty(K)$ is too small to determine $L_{\mu_2}$, since it does 
 not capture the boundary behaviour of $A_2$, since $K\subset C_0^2\Big((0,1)\Big)$, though $\mathcal{F}C_b^\infty(K)$ is dense in $L^2(E;\mu_2)$. For examples of non-Markov uniqueness avoiding this, we refer to \cite[Chap. 5b)2)]{E99}.
\end{itemize}
\end{rem}

It is easy to see that $\mathcal{F}C_b^\infty(K)$ is an operator core for $(L_{\mu_1},D(L_{\mu_1}))$ on $L^2(E;\mu_1)$, i.e. $\mathcal{F}C_b^\infty(K)$
is dense in $D(L_{\mu_1})$ with respect to the graph norm $\| \cdot \|_1:=\| \cdot \|_{L^2(E;\mu_1)} \; +\; \|L_{\mu_1} \cdot \|_{L^2(E;\mu_1)}$. Indeed, consider the Sobolev
space $H_0^1:= H^1_0 \Big((0,1);\mathrm{d} x\Big)$ of order $1$ in $L^2((0,1);\mathrm{d}x)$ with Dirichlet boundary conditions. Then it is obvious from \eqref{5.10} that $\mathcal{F}C_b^\infty(H^1_0)$
is in the closure of $\mathcal{F} C_b^\infty(K)$ with respect to the graph norm $\| \cdot \|_1$.
Furthermore, it is also obvious from the definition of $p_t^{(1)}, \; t\ge0$, in \eqref{5.5}, that $p_t^{(1)}(\mathcal{F}C_b^\infty(H_0^1))\; \subset \mathcal{F}C_b^\infty(H_0^1)$ for all $t\ge0$, since $e^{-tA_1}(H_0^1) \; \subset H^1_0$ for all $t\ge0$.
Hence by a theorem of Nelson (see \cite[Theorem X.49]{RS78}) if follows that $\mathcal{F}C_b^\infty(H^1_0)$, hence $\mathcal{F}C_b^\infty(K)$ is dense in $D(L_{\mu_1})$ with respect to $\| \cdot \|_1$. 

We recall that $\mu_2$ is $L$-infinitesimally invariant (for the domain $\mathcal{F}C_b^\infty(K)$). Now we show that $\mu_2$ is, however, not invariant for  $p_t^{(1)}, \; t\ge 0$,
hence giving the desired example for Question (A).
This can be proved as follows.
Fix $y\in E\backslash \{0\}$ and consider the function 
\begin{equation*}\tag{5.15}\label{5.15}
 f(z):= e^{i\langle y, z \rangle}, \; z \in E.
\end{equation*}
Then 
\[ \int_E f(z) \; \mu_2(\mathrm{d} z)=e^{- \frac{1}{2} \langle \frac{1}{2}A_2^{-1}y,\; y \rangle}\]
and
\begin{equation*}\tag{5.16}\label{5.16}  p_t^{(1)} f(z) = e^{i\langle e^{-tA_1}y,z \rangle}e^{-\frac{1}{2}\langle Q_t^{(1)}y, \; y \rangle},\end{equation*}
hence 
\[\int_E p_t^{(1)} f(z) \;\mu_2(\mathrm{d} z)= e^{-\frac{1}{2}\langle \frac{1}{2}A_2^{-1} e^{-2tA_1}y,\;y \rangle} \cdot e^{-\frac{1}{2} \langle Q_t^{(1)}y, \; y \rangle }.\]
But $Q_t^{(1)} = \frac{1}{2} A_1^{-1} (1-e^{-2tA_1})$, so in general (just choose $y$ in \eqref{5.15} to be an eigenvector of $A_1$)
\begin{equation*}\tag{5.17}\label{5.17}
\int_E f(z) \; \mu_2(\mathrm{d} z)\neq \int_E p_t^{(1)} f(z) \;\mu_2(\mathrm{d} z)
\end{equation*}

So, we have proved that $\mu_2$ is not invariant for $p_t^{(1)}$, $t\ge 0$.
\begin{rem}
Clearly, the martingale problem for $(L, \mathcal{F}C_b^\infty (K))$ is not well-posed, because the laws of the solutions $X^{(1)}$ and $X^{(2)}$ of \eqref{5.7} are solutions to this martingale problem, but do not coincide. The reason is
explained in Remark 5.2 (ii). It is, however, interesting to note that since, as explained above, $\mathcal{F}C_b^\infty(K)$ is an operator core for $(L_{\mu_1}, D(L_{\mu_1}))$ in $L^2(E;\mu_1)$,
it can be easily shown, that there exists at most one Markov selection for the martingale problem for $(L, \mathcal{F}C_b^\infty(K))$ such that the corresponding transition semigroup $(p_t)_{t\ge0}$ extends to a $C_0$-semigroup on $L^2(E;\mu_1)$.
\end{rem}
\textbf{Second Example:}

Define the measure $\mu$ by
\begin{equation*}\tag{5.18}\label{5.18}
 \mu:= N(1, \frac{1}{2} A^{-1}_1),
\end{equation*}
i.e. the image of $\mu_1$, defined in \eqref{5.8}, under the translation 
\[E\ni z \mapsto z+1 \in E,\]
with $1$ being the constant function equal to one on $(0,1)$, we obtain a Gaussian (not centered) measure on $E$ such that 
\begin{equation*}\tag{5.19}\label{5.19}
 L^*\mu = 0
\end{equation*}
and $\mu$ is not invariant for $p^{(1)}_t, t\ge0$. Indeed, \eqref{5.19} follows immediately from the definition of $L$ (by \eqref{5.12}, \eqref{5.10}) since
\[\langle A_1k, 1\rangle =  \int_0^1 -\Delta k \; \mathrm{d}x = 0 \textnormal{  for all } k\in K.\]
On the other hand, we have for $f$ as in \eqref{5.15} 
\[ \int_E f(z)\; \mu(\mathrm{d}z) = e^{i\langle y,1 \rangle}e^{-\frac{1}{2} \langle \frac{1}{2} A_1^{-1}y,y \rangle} \;,\]
but by \eqref{5.16}
\begin{align*}
 \int_E p^{(1)}_t f(z) \; \mu(\mathrm{d}z)\; &= \int_E p^{(1)}_t f(1+z) \; \mu_1(\mathrm{d}z)\\
 &= e^{i\langle e^{-tA_1}y,1\rangle} \int_E p_t^{(1)} f(z) \; \mu_1(\mathrm{d}z)\\
 &= e^{i\langle e^{-tA_1}y,1\rangle} \int_E f(z) \; \mu_1(\mathrm{d}z)\\
 &= e^{i\langle y, e^{-tA_1}1-1\rangle} \int_E f(z) \; \mu (\mathrm{d}z).\tag{5.20}
\end{align*}
But, since $A_1:= -\Delta_D$, we know that \[e^{-tA_1}1\neq1.\]
Hence $\mu$ is not invariant for $p^{(1)}_t, t\ge0$. 

\begin{rem} \mbox{}
     It is very easy to check that in the above example $p_t^{(1)}$, $t \geq 0$, is symme\-tric with respect to $\mu_1$ and that $(L, \mathcal{F} C_b^{\infty} (K))$ is symmetric on $L^2(E,\mu)$. So, we even have
	\begin{equation*}
		\int Lu \, v \, \mathrm{d}\mu = \int u \, Lv \, \mathrm{d}\mu \ \text{for all} \ u,v \in \mathcal{F} C_b^{\infty} (K) \text{,}
	\end{equation*}
	which is stronger than (\ref{5.19}).

\end{rem}


\section{Acknowledgements}

It is a special pleasure for us to contribute to this volume in Honour of Professor Fukushima. He has been a great mentor for us, providing 
us much inspiration over many years. We take the opportunity to express to him our sincere gratitude, our friendship and great admiration. 
We also would like to thank Zdzislaw Brzezniak for pointing out an error in an earlier version of this paper. Financial support of the DFG 
through SFB 701, IGK 1132, and the HCM, as well as of the NCMIS, 973 project (2011CB808000), NSFC (11021161) and NSERC (Grant No. 311945-2013)
 is gratefully \\acknowledged. 



\end{document}